\documentclass[review,hidelinks,onefignum,onetabnum]{siamart251216}

\usepackage{lipsum}
\usepackage{amsfonts}
\usepackage{graphicx}
\usepackage{epstopdf}
\usepackage{algorithmic}

\usepackage{nicefrac}
\usepackage{longtable}
\usepackage{color}
\usepackage{graphicx, amssymb,graphics}

\ifpdf
\DeclareGraphicsExtensions{.eps,.pdf,.png,.jpg}
\else
\DeclareGraphicsExtensions{.eps}
\fi

\newtheorem{lem}{Lemma}

\newsiamremark{remark}{Remark}
\newsiamremark{hypothesis}{Hypothesis}
\crefname{hypothesis}{Hypothesis}{Hypotheses}
\newsiamthm{claim}{Claim}
\newsiamremark{fact}{Fact}
\crefname{fact}{Fact}{Facts}

\newcommand\dt {{\Delta t}}

\newcommand{\bv}{\boldsymbol{v}}
\newcommand{\bof}{\boldsymbol{f}}
\newcommand{\bu}{\boldsymbol{u}}
\newcommand{\be}{\boldsymbol{e}}
\newcommand{\bI}{\boldsymbol{I}}
\newcommand{\bpi}{\boldsymbol{\Pi}}
\newcommand{\bnu}{\boldsymbol{\nu}}
\newcommand{\bsigma}{\boldsymbol{\sigma}}

\headers{Entropy-stable method for the Dynamic van der Waals Model}{Y. Guo, Z. Qiao and X. Xu}

\title{Linear and entropy-stable scheme for the dynamic van der Waals model of non-isothermal liquid-vapor flows\thanks{Z. Qiao’s work is partially supported by NSFC/RGC Joint Research Scheme N\_PolyU5145/24 and the Hong Kong Polytechnic University RISE project 4-CDDQ. X. Xu's work is partially supported by NSFC projects (No. 12371415, 12461160275).}}

\author{
	Yunzhuo Guo \thanks{Department of Applied Mathematics, the Hong Kong Polytechnic University, Hung Hom, Hong Kong (yunzguo@polyu.edu.hk)}
	\and
	Zhonghua Qiao \thanks{Department of Applied Mathematics, the Hong Kong Polytechnic University, Hung Hom, Hong Kong (zhonghua.qiao@polyu.edu.hk)}
	\and
	Xianmin Xu \thanks{State Key Laboratory of Mathematical Sciences, Academy of Mathematics and Systems Science, Chinese Academy of Sciences, Beijing 100190, PR China (xmxu@lsec.cc.ac.cn)}
}

\usepackage{amsopn}

\begin{document}

\maketitle

\begin{abstract}
In this paper, we propose a linear and decoupled fully discrete scheme that preserves the entropy production property for the dynamic van der Waals model.
This model provides a thermodynamically consistent diffuse-interface framework for describing non-isothermal phase transitions in single-component liquid-vapor systems, containing the coupled conservation laws of mass, momentum and energy. 
We first introduce an equivalent reformulation of the governing equations that avoids the use of chain-rule identities, which generally fail under discrete differentiation. Further, a component-wise convex-splitting method is employed to calculate the discrete chemical potential and temperature. This treatment provides a discrete entropy-stable inequality without a time step restriction.
Based on the explicitly treated velocity, the scheme allows the three conservation equations to be solved linearly and sequentially in a decoupled manner. 
Since the momentum equation does not directly affect the entropy-stability analysis, we use a classical linear semi-implicit scheme to solve it.
Numerical experiments verify the expected numerical accuracy. Simulations of droplet evaporation and two- and three-dimensional Leidenfrost effects further demonstrate the proposed method can capture complex non-isothermal phase-transition dynamics while numerically reproducing the discrete entropy production behavior.
\end{abstract}

\begin{keywords}
Thermodynamic consistency; dynamic van der Waals model; non-isothermal liquid-vapor flow; entropy-stable scheme; component-wise convex-splitting method
\end{keywords}

\begin{MSCcodes}
65M06, 65M12, 76T10, 80A20, 35Q35
\end{MSCcodes}

\section{Introduction} 
Non-isothermal processes are ubiquitous in nature, and the models used to describe them depend on the number of components and phases involved. The dynamic van der Waals model, also known as the dynamic van der Waals theory\cite{Onuki-PRE-05}, provides a thermodynamically consistent continuum framework for describing non-isothermal phase transitions in single-component liquid-vapor systems. These transitions exhibit fascinating phenomena, including the Leidenfrost effect \cite{Leidenfrost-13,Bouillant-2018} and thermocapillary effects \cite{JFM_2002,Guo-Lin-2015}. The ability to simulate such processes under dynamical thermal conditions makes the dynamic van der Waals model a powerful tool for understanding complex non-isothermal phase behavior.
This model is derived from a Helmholtz free energy, which is the equilibrium mean field theory of hard spherical particles subjected to long-range attractive forces \cite{van-der-Waals-79,vanDerWaals1988}.
\begin{equation}
	f(n,T)=n k_B T \left[\ln \left(\lambda_{\mathrm{th}}^3 n\right)-1-\ln \left(1-v_0 n\right)\right]-\varepsilon v_0 n^2  \nonumber
\end{equation}
where $n$ is the molecular number density, $T$ is the temperature, $v_0=a^3$ is the molecular volume, and $\varepsilon$ is the intermolecular attractive potential. $\lambda_{\mathrm{th}} = \hbar( 2 \pi / m_0 k_B T )^{1/ 2}$ is the thermal de Broglie wavelength, where $m_0$ and $\hbar$ are the molecular mass and Planck constant, respectively. 
From the free energy, the thermodynamic relations lead to explicit expressions for the internal energy density and entropy density:
\begin{align}
	& s(n,T) = -nk_B \left(3\ln \hbar + \frac{3}{2}\ln \frac{2\pi}{m_0 k_B T} + \ln \frac{n}{1-v_0 n}  \right)  + \frac{5 nk_B}{2}, \label{s(n,T)} \\
	& e(n,T) = \frac{3nk_B T}{2}-\varepsilon v_0 n^2. \label{e(n,T)}
\end{align}
A gradient contribution of density can also be incorporated into the free energy for inhomogeneous systems in thermodynamic equilibrium \cite{van-der-Waals-79}:
\begin{align}
	\hat{f}(n,T) = f(n,T) + \frac{M}{2} |\nabla n|^2. \nonumber
\end{align}

It is common to choose the number density $n$ and the temperature $T$ as the primary unknowns, with the internal energy $e$ and entropy $s$ regarded as functions of $n$ and $T$ \cite{Smejkal-2017,Kou-18,Kou-18b}. Based on \eqref{s(n,T)} and \eqref{e(n,T)}, an equivalent transformation can be applied to rewrite the independent variables as the number density $n$ and the internal energy $e$:
\begin{align}
	& s(n,e) = -nk_B \left( 3\ln \hbar + \frac{3}{2}\ln \frac{3\pi n}{m_0} - \frac{3}{2}\ln (e+\varepsilon v_0 n^2) + \ln \frac{n}{1-v_0n} \right) + \frac{5}{2} nk_B, \nonumber \\
	&  \frac{1}{T} = \frac{\partial s}{\partial e} =  \frac{3k_Bn}{2(e+\varepsilon v_0 n^2)}.\nonumber
\end{align}
The dynamic van der Waals model uses latter. When non-uniform temperature is taken into account, the dynamic van der Waals model introduces gradient contributions to the internal energy density and entropy density respectively:
\begin{align}
	\hat{e} = e + \frac{K}{2}|\nabla n|^2, \quad \hat{s} = s - \frac{C}{2}|\nabla n|^2. \nonumber
\end{align}
The governing equations are based on the conservation of mass, momentum, and energy:
\begin{align}
	& \frac{\partial n}{\partial t} + \nabla \cdot(n \bu )=0, \label{govern 1} \\
	& m_0n\left(\frac{ \partial \boldsymbol{u}}{\partial t}+\bu \cdot \nabla \bu \right)=\nabla \cdot (\boldsymbol{\sigma}-\bpi)-m_0 n g \be_z,   \label{govern 2}\\
	& \frac{\partial e_{tot}}{\partial t}+\nabla \cdot\left(e_{tot} \boldsymbol{u}\right)=\nabla \cdot\left( (\boldsymbol{\sigma}-\bpi) \boldsymbol{u} \right) + \nabla \cdot (\lambda\nabla T)-m_0 n g \bu\cdot\be_z, \label{govern 3} \\
	& T = \frac{2(e+\varepsilon v_0 n^2)}{3k_Bn},\label{govern 4}
\end{align}
where $e_{tot}=e+\frac{K}{2}|\nabla n|^2+\frac{m_0 n}{2}|\bu|^2$ is the total energy density, $\be_z$ is the upward unit vector in the vertical direction ($\be_y$ in 2D), $g$ is the gravitational acceleration, and $\lambda$ is the thermal conductivity. 
$\bsigma$ denotes the dissipative stress and $\bpi$ is the reversible stress tensor containing the pressure and density-gradient contributions. The latter can be derived from the second law of thermodynamics or the Onsager variational principle \cite{Onsager-PRE-1,Onsager-PRE-2}. The definitions are given below: 
\begin{align}
	& \boldsymbol{\sigma}=\eta\left(\nabla \boldsymbol{u}+\nabla \boldsymbol{u}^T\right)+\left(\xi-\frac{2 \eta}{3}\right) \nabla \cdot \boldsymbol{u} \boldsymbol{I}, \quad \text{$\eta$ and $\xi$ are viscosity coefficients,} \nonumber \\
	& \boldsymbol{\Pi}=\hat{p} \boldsymbol{I}+(K+CT) \nabla n \otimes \nabla n, \nonumber \\
    & \hat{p}=n \hat{\mu}-\hat{e}+T \hat{s}, \quad  \hat{\mu}=-T\left(\frac{\partial s}{\partial n}+\nabla \cdot \left(\frac{K+CT}{T} \nabla n\right)\right). \nonumber
\end{align}
Based on the mass and momentum equations, the energy balance \eqref{govern 3} could be also rewritten as 
\begin{equation}
	\frac{\partial \hat{e}}{\partial t}+\nabla \cdot\left(\hat{e} \boldsymbol{u}\right)= (\boldsymbol{\sigma}-\bpi):\nabla \boldsymbol{u}  + \nabla \cdot (\lambda\nabla T), \label{another energy equation}
\end{equation}
$\hat{e}$ is the internal energy with gradient contribution $\hat{e}:=e+\frac{K}{2}|\nabla n|^2$.

The boundary conditions can be different according to the specific modeling framework. Considering the complexity involved in both theoretical analysis and numerical computation, simple boundary conditions are often imposed. For example, one may use the homogeneous Neumann condition for the number density, $\bnu\cdot\nabla n=0$, a homogeneous Neumann or Dirichlet condition for the temperature, $\bnu\cdot\nabla T =0$ or $T|_\Gamma=T_0$, and the homogeneous Dirichlet condition for the velocity, $\bu|_\Gamma=\boldsymbol{0}$. The energy and entropy on the boundary can derive complex boundary conditions. In \cite{Onuki-PRE-05,Onuki-PRE-07}, the authors applied equilibrium boundary conditions to the governing equations, which assumes that the evolution near the boundary is considerably faster than that in the bulk region. In \cite{Qiao_Xu_Yang_Zhang_2026}, the authors extended the dynamic van der Waals model via the Onsager principle, thereby obtaining dynamic boundary conditions. 
Based on Waldmann’s approach \cite{Waldmann-1967}, Liu et al.\ developed suitable boundary conditions \cite{Xu-Liu-Qian-12}. These formulations were subsequently used in the simulations of droplet motion driven by temperature gradients \cite{Xu-Qian-PRE-12}, boiling phenomena \cite{Xu-Qian-PRE-13}, and the Leidenfrost effect \cite{Xu-Qian-PRE-14}, confirming the validity of the model even though energy conservation and entropy production were not enforced at the discrete level.

Energy conservation and entropy production are fundamental thermodynamic structures of the model. Under thermodynamically consistent boundary conditions, the boundary energy and entropy fluxes must also be taken into account. Preserving these physical structures is a major challenge in designing numerical schemes. 
Convex-splitting and implicit-explicit methods are widely used to construct efficient and stable numerical schemes for phase-field models \cite{chen16,CHEN-2019,chen22a,Guo24,LiQuanTang2022,ZhangWangTeng2024}. The ideas are also applicable to entropy-stable discretizations. Related work has been carried out by Kou et al.\ \cite{Kou-18,Kou-18b}, where an entropy-stable semi-discrete scheme was developed via a component-wise convex-splitting approach. Energy conservation was achieved by an artificial viscosity term, however, this also makes it impossible to generalize the numerical scheme to the fully discrete case.
Using a special jump operator, Landis et al.\ \cite{Liu-Landis-2015} developed an entropy-stable numerical scheme for the Navier-Stokes-Korteweg equations, which also provide an important framework for non-isothermal liquid-vapor systems. A number of impressive numerical results were published as well.

Although the incompressibility assumption is not suitable for single-component two-phase flows, it is reasonably valid in certain cases. The divergence-free velocity significantly reduces the computational complexity. Numerical simulations of Marangoni effects were reported in earlier years \cite{Sun-Liu-Xu-2009,Guo-Lin-Wang-2014}. Wang et al.\ developed fully discrete thermodynamically consistent numerical methods for incompressible multi-component models, including non-isothermal crystal growth models \cite{Li-Zhao-Wang-19} and non-isothermal hydrodynamic models of binary fluid flows \cite{Sun-Li-Zhao-Wang-20,Sun-Wang-24}. In addition, in \cite{Kou-26a}, the authors developed a non-isothermal framework for incompressible two-phase flow in porous media by coupling heat transfer with the two-phase flow equations.

In this work, we first introduce a suitable non-dimensionalization to reduce the difference of the physical scales and the number of independent parameters. 
The chain rule for variational derivatives is not valid under discrete temporal and spatial differentiation, which is an essential issue for fully discrete scheme. To address this issue, we equivalently rewrite the original equations using stress identities.  
After that, a linear, entropy-producing, fully discrete numerical scheme is proposed. The main idea comes from the observation that the mass balance equation \eqref{govern 1} is naturally decoupled by treating the velocity explicitly. The velocity in the energy equation is also treated explicitly, so that all three equations can be solved in a decoupled manner.
The chemical potential and temperature are defined through the component-wise convex-splitting approach, in which one variable is fixed while the splitting is performed with respect to the other, rather than decomposing the Hessian matrix directly. This strategy is commonly adopted in multi-component phase-field models \cite{Xiaolin-2017,kou-2026}. 
The momentum equation does not directly affect the entropy-production analysis, so we use a classical linear semi-implicit scheme to solve it. Since the number density is computed from the mass equation, the convex-splitting method also introduces no nonlinearity. In addition, a standard cell-centered finite difference method on staggered grids is employed for the spatial discretization. 

The rest of this paper is organized as follows. 
In view of the extreme scales of the physical quantities, the equations are first non-dimensionalized in Section \ref{sec:dimensionless}, along with the non-dimensional entropy analysis. In Section \ref{sec:numerical scheme}, the dimensionless system is equivalently rewritten. After that, a linear, decoupled, fully discrete scheme is constructed via the component-wise convex-splitting approach. The entropy-production property is proved based on the convexity. Section \ref{sec: simulations} is devoted to the verification of convergence orders and numerical simulations of droplet evaporation and the two- and three-dimensional Leidenfrost effect. A summary of the paper is given in Section \ref{sec: conclusion}.

\section{The dimensionless equations} \label{sec:dimensionless}
In order to improve numerical stability, reduce the difference among the physical scales, and simultaneously decrease the number of parameters, the governing equations are first non-dimensionalized. Before doing so, we adopt several assumptions commonly used in the literature \cite{Onuki-PRE-07,Xu-Qian-PRE-13,Xu-Qian-PRE-14}: (i) the shear and bulk viscosities are equal and proportional to the number density, $\eta(n) = \xi(n) = \nu_0 m_0 n$, where $\nu_0 = \eta / \rho$ is the kinematic viscosity independent of $n$; (ii) the thermal conductivity is also proportional to the number density, $\lambda = k_B \nu_0 n$; (iii) the gradient coefficient in energy is zero $K=0$ and $C$ is a positive constant, namely
\begin{equation*}
	\hat{e} =e, \quad e_{tot} = e+\frac{m_0n}{2}|\bu|^2,\quad \hat{s} = s - \frac{C}{2} |\nabla n|^2.
\end{equation*}
The viscosity and thermal conductivity coefficients can also be chosen to be either constants or nonlinear functions of the density and temperature. If the latter is adopted, the non-dimensionalization will be different.

The number density is naturally scaled by $\tilde{n}=v_0 n$, where $v_0$ is the molecular volume. It enforces that the dimensionless number density between $0$ and $1$. Here the tilde denotes dimensionless variables. Denote the space and time scales by $l_0$ and $t_0$, respectively, which further induce the velocity scale $\frac{l_0}{t_0}$:
\begin{align}
	& (\tilde{x},\tilde{y}) = (x,y)/l_0, \quad l_0=\left(\frac{C}{2k_Bv_0}\right)^{1/2}, \nonumber\\
	& \tilde{t}=t/t_0, \quad t_0 = \frac{l_0^2}{\nu_0}, \quad \tilde{\bu}=\bu \frac{t_0}{l_0}.\nonumber
\end{align}
The length reference $l_0$ guarantees that the gradient contribution and the entropy are scaled consistently in the non-dimensionalized equations. The time scale $t_0$ is the viscous relaxation time corresponding to the length scale $l_0$ and the Reynolds number is set to unity consequently. The entropy and internal energy are scaled in a natural way
\begin{align}
	\tilde{e} = \frac{v_0}{\varepsilon} e, \quad \tilde{s} = \frac{v_0}{k_B} s,\nonumber
\end{align}
while the temperature and chemical potential are treated by variational identity
\begin{align}
	& \frac{1}{T} = \frac{\partial s}{\partial e} = \frac{\partial \frac{k_B}{v_0} \tilde{s} }{\partial \frac{\varepsilon}{v_0} \tilde{e}} = \frac{k_B}{\varepsilon} \frac{1}{\tilde{T}}, \quad \tilde{T} = \frac{k_B}{\varepsilon} T,\nonumber \\
	& \hat{\mu}=-T\left(\frac{\partial s}{\partial n}+C\Delta n\right) = -\frac{\varepsilon}{k_B}\tilde{T} \cdot k_B\left(\frac{\partial \tilde{s}}{\partial \tilde{n}}+2\Delta \tilde{n}\right) = \varepsilon \tilde{\hat{\mu}}, \quad \tilde{\hat{\mu}} = \frac{1}{\varepsilon}\hat{\mu}.\nonumber
\end{align}

Consequently, the dimensionless equations are obtained after dropping the tildes:
\begin{align}
	& \frac{\partial n}{\partial t} + \nabla\cdot (n\bu) = 0, \label{dimensionless eq 1} \\
	& R\left(n\frac{\partial \bu}{\partial t} + n\bu\cdot\nabla\bu\right)=\nabla \cdot \left( R\boldsymbol{\sigma}-\bpi \right)-G n \be_z,\label{dimensionless eq 2} \\
	& \frac{\partial e}{\partial t} + \nabla\cdot ( e \bu) = (R\boldsymbol{\sigma}-\bpi):\nabla \bu + \nabla\cdot(n\nabla T), \label{dimensionless eq 3}\\
	& \frac{1}{T} = \frac{\partial s}{\partial e} = \frac{3 n}{2(e+n^2)}.\label{dimensionless eq 4}
\end{align}
with boundary conditions $\bu |_\Gamma = \boldsymbol{0}$, $\bnu\cdot\nabla n = 0$, and $\bnu\cdot\nabla T =0$ or $T|_\Gamma=T_0$ in simulations. The parameter $R=\frac{m_0 l_0^2}{t_0^2\varepsilon}$ is a dimensionless quantity related to the molecular kinetic energy and the intermolecular attractive potential. The parameter $G=\frac{m_0gl_0}{\varepsilon}$ represents gravity. The expressions for the dimensionless entropy, energy, and stress are given as follows:
\begin{align}
	& s(n,e)=-n\left(\frac{3}{2}\ln n+\ln\frac{n}{1-n}+M\right)+\frac{3}{2}n\ln(e+n^2),\quad M=\frac{3}{2}\ln \frac{3\pi\hbar^2}{m_0 \varepsilon v_0^{\frac{2}{3}}} - \frac{5}{2}, \nonumber \\
	&\boldsymbol{\sigma}=n\left(\nabla \boldsymbol{u}+\nabla \boldsymbol{u}^T\right)+\frac{1}{3}n\nabla \cdot \bu \boldsymbol{I}, \nonumber \\
    & E_{tot}=\left(e,1\right)+\frac{R}{2}\left( n,|\bu|^2\right), \quad \hat{s}=s-|\nabla n|^2,\quad \hat{S}=(\hat{s},1), \nonumber\\
	& \bpi=\hat{p} \boldsymbol{I}+2T \nabla n \otimes \nabla n, \quad \hat{p}=n \hat{\mu}-e+T \hat{s}, \quad \hat{\mu}=-T\left(\frac{\partial s}{\partial n}+2\Delta n\right) . \nonumber
\end{align}
The dimensionless equations \eqref{dimensionless eq 1}-\eqref{dimensionless eq 4} satisfy entropy production and total energy conservation, which is essential in numerical simulation.
\begin{theorem}\label{non-dimensional entropy and energy}
	The dimensionless equations \eqref{dimensionless eq 1}-\eqref{dimensionless eq 4} satisfy the following energy conservation and entropy production properties:
	\begin{itemize}
		\item[(i)] energy conservation
		\begin{equation}
			\frac{\partial}{\partial t} E_{tot} = \left(n, \bnu\cdot\nabla T\right)_\Gamma - \left(G n \be_z,\bu\right), \label{non-dimensional energy}
		\end{equation}
		\item[(ii)] entropy production
		\begin{align}
			\frac{\partial}{\partial t} \hat{S} &= \left( \frac{n}{T^2}\nabla T, \nabla T \right) + \left(\frac{n}{T}, \frac{R}{2}\left(\nabla \boldsymbol{u}+\nabla \boldsymbol{u}^T\right):\left(\nabla \boldsymbol{u}+\nabla \boldsymbol{u}^T\right) \right)  \label{non-dimensional entropy} \\
			& +  \left( \frac{Rn}{3T}, (\nabla\cdot\bu)^2\right)+\left( \frac{n}{T},\bnu\cdot\nabla T \right)_\Gamma \geq \left( \frac{n}{T},\bnu\cdot\nabla T \right)_\Gamma. \nonumber
		\end{align}
	\end{itemize}
	Here, the surface integral represents the heat flux, which vanishes under the homogeneous Neumann boundary condition for temperature. The effect of gravity is also included in the energy balance.
\end{theorem}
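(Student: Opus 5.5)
We verify both identities by testing the equations against suitable multipliers, integrating by parts, and using the boundary conditions $\bu|_\Gamma=\boldsymbol{0}$ and $\bnu\cdot\nabla n=0$. Throughout, $(\cdot,\cdot)$ is the $L^2(\Omega)$ inner product and $(\cdot,\cdot)_\Gamma$ the boundary one.

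\emph{Part (i).} Test \eqref{dimensionless eq 2} with $\bu$. Using \eqref{dimensionless eq 1}, the standard identity
\begin{equation*}
\Big(n\partial_t\bu+n\bu\cdot\nabla\bu,\bu\Big)=\tfrac{d}{dt}\tfrac12(n,|\bu|^2)
\end{equation*}
holds. Since $\bu$ vanishes on $\Gamma$, integrating the stress term by parts gives
\begin{equation*}
\tfrac{R}{2}\tfrac{d}{dt}(n,|\bu|^2)=-\big(R\bsigma-\bpi,\nabla\bu\big)-(Gn\be_z,\bu).
\end{equation*}
Next, integrate \eqref{dimensionless eq 3} over $\Omega$. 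The convective term drops because $\bu|_\Gamma=0$, and the heat term becomes $(n,\bnu\cdot\nabla T)_\Gamma$. The stress work $(R\bsigma-\bpi):\nabla\bu$ cancels the corresponding term from the kinetic energy, and adding the two relations yields \eqref{non-dimensional energy}.

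\emph{Part (ii).} This is the main computation. Write
\begin{equation*}
\tfrac{d}{dt}\hat S=(\partial_t s,1)-2(\nabla n,\nabla\partial_t n)=\Big(\tfrac{\partial s}{\partial n}+2\Delta n,\partial_t n\Big)+\Big(\tfrac1T,\partial_t e\Big),
\end{equation*}
using \eqref{dimensionless eq 4} and $\bnu\cdot\nabla n=0$. The first factor equals $-\hat\mu/T$ by definition. Substitute $\partial_t n=-\nabla\cdot(n\bu)$ and $\partial_t e$ from \eqref{dimensionless eq 3}. The dissipative and heat contributions are then straightforward. First,
\begin{equation*}
\Big(\tfrac1T,\nabla\cdot(n\nabla T)\Big)=\Big(\tfrac{n}{T^2}\nabla T,\nabla T\Big)+\Big(\tfrac nT,\bnu\cdot\nabla T\Big)_\Gamma .
\end{equation*}
Second, $\big(\tfrac RT\bsigma,\nabla\bu\big)$ expands, via the symmetry of $\nabla\bu+\nabla\bu^T$ and the definition of $\bsigma$, into
\begin{equation*}
\Big(\tfrac nT,\tfrac R2(\nabla\bu+\nabla\bu^T):(\nabla\bu+\nabla\bu^T)\Big)+\Big(\tfrac{Rn}{3T},(\nabla\cdot\bu)^2\Big).
\end{equation*}

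It remains to show that the reversible terms vanish:
\begin{equation*}
\Big(\tfrac{\hat\mu}{T},\nabla\cdot(n\bu)\Big)-\Big(\tfrac1T,\nabla\cdot(e\bu)\Big)-\Big(\tfrac1T\bpi,\nabla\bu\Big)=0.
\end{equation*}
I expect this to be the main obstacle, since it requires the chain rule together with the specific form of $\hat p$.

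To handle it, integrate the first two terms by parts (with $\bu|_\Gamma=0$) to get
\begin{equation*}
-\Big(n\nabla\tfrac{\hat\mu}T,\bu\Big)+\Big(e\nabla\tfrac1T,\bu\Big).
\end{equation*}
For the stress term, write
\begin{equation*}
\tfrac1T\bpi=\tfrac{\hat p}{T}\bI+2\nabla n\otimes\nabla n
\quad\text{with}\quad
\tfrac{\hat p}{T}=\tfrac{n\hat\mu}{T}-\tfrac eT+\hat s,
\end{equation*}
and integrate by parts to obtain
\begin{equation*}
\Big(\nabla\tfrac{\hat p}{T}+2\nabla\cdot(\nabla n\otimes\nabla n),\bu\Big).
\end{equation*}
It then suffices to prove the pointwise identity
\begin{equation*}
\nabla\Big(\tfrac{n\hat\mu}{T}-\tfrac eT+\hat s\Big)+2\nabla\cdot(\nabla n\otimes\nabla n)=n\nabla\tfrac{\hat\mu}{T}-e\nabla\tfrac1T .
\end{equation*}
Expanding the left-hand side reduces this to
\begin{equation*}
\tfrac{\hat\mu}{T}\nabla n-\tfrac1T\nabla e+\nabla s-\nabla|\nabla n|^2+2\nabla\cdot(\nabla n\otimes\nabla n)=0.
\end{equation*}
By the chain rule, $\nabla s=\tfrac{\partial s}{\partial n}\nabla n+\tfrac1T\nabla e$, so the $\nabla e$ terms cancel. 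Using $\tfrac{\hat\mu}{T}=-\tfrac{\partial s}{\partial n}-2\Delta n$, what remains is
\begin{equation*}
-2\Delta n\,\nabla n-\nabla|\nabla n|^2+2\nabla\cdot(\nabla n\otimes\nabla n).
\end{equation*}
This vanishes by the vector identity $\nabla\cdot(\nabla n\otimes\nabla n)=\Delta n\,\nabla n+\tfrac12\nabla|\nabla n|^2$.

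Collecting the terms gives the equality in \eqref{non-dimensional entropy}. The final inequality follows because $n>0$ and $T>0$, so every volume term is nonnegative.
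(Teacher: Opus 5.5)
Your proposal is correct and follows the paper's proof essentially step for step. For (i), both combine the kinetic-energy identity obtained by testing the momentum equation with $\bu$ with the integrated energy equation. For (ii), both reduce the reversible terms to the pointwise identity $\nabla\cdot(\bpi/T)=n\nabla(\hat{\mu}/T)-e\nabla(1/T)$, which follows from the chain rule for $s$ and from $2\nabla\cdot(\nabla n\otimes\nabla n)=\nabla|\nabla n|^2+2\Delta n\,\nabla n$; your remark that the final inequality needs $n>0$ and $T>0$ makes explicit what the paper leaves implicit.
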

\begin{proof}
	First, we derive the energy conservation law. For the internal energy, the integration-by-parts formula yields
	\begin{equation}
		\frac{\partial}{\partial t} \left( e,1 \right) = \left(  (R\boldsymbol{\sigma}-\bpi):\nabla \bu, 1\right) + \left(n, \bnu\cdot\nabla T\right)_\Gamma. \label{nodim-energy-1}
	\end{equation}
	For the kinetic energy, we have
	\begin{align}
		&\frac{\partial}{\partial t} \frac{R}{2}\left( n,|\bu|^2\right)  = \frac{R}{2}\left( \frac{\partial n}{\partial t}, |\bu|^2 \right) + R\left(n\frac{\partial \bu}{\partial t},\bu \right)  \label{nodim-energy-2}\\
        & \quad\quad = \frac{R}{2}\left( n\bu, \nabla |\bu|^2 \right) + R\left(n\frac{\partial \bu}{\partial t},\bu \right) \nonumber\\
		& \quad\quad = R\left( n\bu, \bu \cdot \nabla \bu \right) - R\left( n\bu, \bu \cdot \nabla \bu \right) + \left( \nabla \cdot \left( R\boldsymbol{\sigma}-\bpi \right),\bu \right) - \left(G n \be_z,\bu\right)\nonumber \\
		& \quad\quad = -\left(  (R\boldsymbol{\sigma}-\bpi):\nabla \bu, 1\right)- \left(G n \be_z,\bu\right), \nonumber
	\end{align}
	where the mass equation \eqref{dimensionless eq 1} and the integration-by-parts formula have been used. \eqref{nodim-energy-1} and \eqref{nodim-energy-2} lead to the energy conservation property \eqref{non-dimensional energy}:
	\begin{equation}
		\frac{\partial}{\partial t} E_{tot} = \frac{\partial}{\partial t} \left( e,1 \right) + \frac{\partial}{\partial t} \frac{R}{2}\left( n,|\bu|^2\right) = \left(n, \bnu\cdot\nabla T\right)_\Gamma - \left(G n \be_z,\bu\right).\nonumber
	\end{equation}
	Now we consider the entropy part. Taking the time derivative of the entropy gives 
	\begin{align}
		\frac{\partial}{\partial t} \hat{S} = \left( \frac{\partial s}{\partial t},1 \right) + 2\left( \frac{\partial n}{\partial t}, \Delta n \right) = \left( \frac{\partial s}{\partial n},\frac{\partial n}{\partial t} \right) + \left( \frac{\partial s}{\partial e},\frac{\partial e}{\partial t} \right) + 2\left( \frac{\partial n}{\partial t}, \Delta n \right).\nonumber
	\end{align}
	Based on the definitions of the chemical potential and temperature, together with the mass and energy equations, we derive the following identities:
	\begin{align}
		&\frac{\partial}{\partial t} \hat{S}  = \left( \frac{\partial e}{\partial t}, \frac{1}{T} \right) - \left( \frac{\partial n}{\partial t}, \frac{\hat{\mu}}{T} \right) = \left( \frac{\partial e}{\partial t}, \frac{1}{T} \right) + \left( \nabla\cdot(n\bu), \frac{\hat{\mu}}{T} \right) \label{nodim-energy-4} \\
		& = \left( \frac{1}{T}, \nabla\cdot(n\nabla T) \right) + \left( \frac{1}{T}, (R\boldsymbol{\sigma}-\bpi):\nabla \bu \right) + \left( e\bu, \nabla\frac{1}{T} \right) - \left( n\bu, \nabla\frac{\hat{\mu}}{T} \right) \nonumber \\
		& = \left( \frac{n}{T^2}\nabla T, \nabla T \right) + \left(\frac{n}{T}, \frac{R}{2}\left(\nabla \boldsymbol{u}+\nabla \boldsymbol{u}^T\right):\left(\nabla \boldsymbol{u}+\nabla \boldsymbol{u}^T\right) \right)+\left( \frac{Rn}{3T}, (\nabla\cdot\bu)^2\right)  \nonumber \\
		& \qquad + \left( \frac{n}{T},\bnu\cdot\nabla T \right)_\Gamma+\left( \nabla \cdot \frac{\bpi}{T} + e\nabla\frac{1}{T} - n \nabla \frac{\hat{\mu}}{T}, \bu \right).\nonumber
	\end{align} 
	It is clear that the first three terms on the right-hand side are nonnegative. The last term vanishes by a direct calculation
	\begin{align}
		\frac{\bpi}{T} & = \left(n\frac{\hat{\mu}}{T}-\frac{e}{T}+s-|\nabla n|^2\right)\bI+2\nabla n \otimes \nabla n, \label{nodim-energy-5}\\
		\nabla\cdot\frac{\bpi}{T}& = \nabla\left(n\frac{\hat{\mu}}{T}-\frac{e}{T}+s-|\nabla n|^2\right) + 2\nabla\cdot(\nabla n \otimes \nabla n) \label{nodim-energy-6} \\
		& = \nabla\left(n\frac{\hat{\mu}}{T}-\frac{e}{T}\right)+\frac{\partial s}{\partial n}\nabla n+\frac{\partial s}{\partial e}\nabla e + 2(\Delta n)\nabla n \nonumber \\
		& = \nabla\left(n\frac{\hat{\mu}}{T}-\frac{e}{T}\right) - \frac{\hat{\mu}}{T}\nabla n + \frac{1}{T}\nabla e \nonumber \\
		& = n\nabla \frac{\hat{\mu}}{T} - e\nabla \frac{1}{T}, \nonumber
	\end{align}
	where the following chain-rule identities are essential:
	\begin{align}
		\nabla s & = \frac{\partial s}{\partial n}\nabla n+\frac{\partial s}{\partial e}\nabla e,  \label{nodim-energy-7} \\
		2\nabla\cdot(\nabla n \otimes \nabla n) & = \nabla |\nabla n|^2 + 2(\Delta n)\nabla n. \label{nodim-energy-8}
	\end{align}
	\eqref{nodim-energy-6} and \eqref{nodim-energy-4} lead to the entropy production law \eqref{non-dimensional entropy}
	\begin{align}
		\frac{\partial}{\partial t} \hat{S} = \left( \frac{n}{T^2}\nabla T, \nabla T \right)& + \left(\frac{n}{T}, \frac{R}{2}\left(\nabla \boldsymbol{u}+\nabla \boldsymbol{u}^T\right):\left(\nabla \boldsymbol{u}+\nabla \boldsymbol{u}^T\right) \right) + \left( \frac{Rn}{3T}, (\nabla\cdot\bu)^2\right) \nonumber \\
		&  +\left( \frac{n}{T},\bnu\cdot\nabla T \right)_\Gamma \geq \left( \frac{n}{T},\bnu\cdot\nabla T \right)_\Gamma. \nonumber
	\end{align}
\end{proof}

\section{The fully discrete numerical scheme} \label{sec:numerical scheme}
In this section, we propose a fully discrete numerical scheme and establish the entropy-stability analysis. For simplicity, we present the two-dimensional case only, since the three-dimensional case can be extended in a straightforward manner. Before doing so, it is necessary to reformulate the dimensionless system into an equivalent form.

\subsection{The equivalent governing equations}
The dimensionless system \eqref{dimensionless eq 1}-\eqref{dimensionless eq 4} is not yet in a form suitable for numerical discretization. In fact, because the chain rule fails to hold under both temporal and spatial discretizations, we are unable to mimic the procedure of the Theorem \ref{non-dimensional entropy and energy} to prove the entropy increase property in the discrete case. Observing that the stress $\bpi$ appears in divergence form in the momentum equation and can be expressed in divergence form in the energy equation, we derive an equivalent system based on \eqref{nodim-energy-6} to avoid the impact of the chain rule. The divergence expression of the stress $\bpi$ constitutes the core of the equivalent transformation and can be obtained through careful computation:
\begin{align}
    \nabla \cdot \bpi & = \nabla \cdot\left( \frac{\bpi}{T} \cdot T \right) = T \nabla \cdot \frac{\bpi}{T} + \frac{\bpi}{T} \nabla T \label{div pi 2} \\
    & = T\left( n\nabla \frac{\hat{\mu}}{T} - e\nabla \frac{1}{T} \right) + \frac{1}{T}\left(\hat{p} \boldsymbol{I}+2T \nabla n \otimes \nabla n\right) \nabla T \nonumber  \\
    & = T\left( n\nabla \frac{\hat{\mu}}{T} - e\nabla \frac{1}{T} \right) + \frac{n \hat{\mu}-e}{T}\nabla T+\left( \hat{s}\bI + 2\nabla n \otimes \nabla n \right)\nabla T     \nonumber  \\
    & = n\nabla \hat{\mu} -\frac{n\hat{\mu}}{T}\nabla T + \frac{e}{T}\nabla T + \frac{n \hat{\mu}-e}{T}\nabla T+\left( \hat{s}\bI + 2\nabla n \otimes \nabla n \right)\nabla T  \nonumber \\
    & = n\nabla \hat{\mu} + \left( \hat{s}\bI + 2\nabla n \otimes \nabla n \right) \nabla T \nonumber
\end{align}
where identity \eqref{nodim-energy-6} and the definition of $\bpi$ and $\hat{p}$ are applied.

For the momentum equation, a direct substitution of identity \eqref{div pi 2} leads to the reformulation:
\begin{align}
	R\left(n\frac{\partial \bu}{\partial t} + n\bu\cdot\nabla\bu\right)&=\nabla \cdot \left( R\boldsymbol{\sigma}-\bpi \right) - Gn\be_z  \label{equivalent momentum eq}\\
	& = R\nabla \cdot \boldsymbol{\sigma} - \left( n\nabla \hat{\mu} + \left( \hat{s}\bI + 2\nabla n \otimes \nabla n \right)\nabla T \right) - Gn\be_z.\nonumber
\end{align}
As for the energy equation, the stress $\bpi$ should be rewritten into divergence form first:
\begin{align}
	\frac{\partial e}{\partial t} + \nabla\cdot ( e \bu) &= (R\boldsymbol{\sigma}-\bpi):\nabla \bu + \nabla\cdot(n\nabla T)  \label{equivalent energy eq 1}  \\
	&  = R\boldsymbol{\sigma}:\nabla \bu - \nabla\cdot(\bpi\bu) + \bu \cdot (\nabla\cdot\bpi) + \nabla\cdot(n\nabla T). \nonumber
\end{align}
The definition of $\bpi$ and \eqref{div pi 2} show that
\begin{align}
	 - \nabla\cdot(\bpi\bu) &= - \nabla\cdot\left( (n \hat{\mu}-e+T \hat{s})\bu + 2T \nabla n \otimes \nabla n \cdot \bu \right)\label{equivalent energy eq 2}   \\
	&  = \nabla\cdot ( e \bu) -n\bu \cdot \nabla \hat{\mu} -\hat{\mu}\nabla\cdot(n\bu) - \hat{s}\bu \cdot \nabla T \nonumber \\
    & \quad\quad -T\nabla\cdot(\hat{s}\bu) - 2\nabla\cdot\left( T \nabla n \otimes \nabla n \cdot \bu \right),  \nonumber
\end{align}
\begin{align}
	& - 2\nabla\cdot\left( T \nabla n \otimes \nabla n \cdot \bu \right) = -2\nabla\cdot\left( (\bu \cdot \nabla n)T\nabla n \right) \label{equivalent energy eq 3}\\
    & \quad\quad\quad= -2(\bu \cdot \nabla n)\nabla\cdot(T\nabla n)-2T\nabla(\bu \cdot \nabla n)\cdot \nabla n \nonumber \\
	& \quad\quad\quad = -2(\bu \cdot \nabla n)(\nabla T \cdot\nabla n)-2(\bu \cdot \nabla n)T \Delta n-2T\nabla(\bu \cdot \nabla n)\cdot \nabla n \nonumber\\
	& \quad\quad\quad = -2(\bu \cdot \nabla n)(\nabla T \cdot\nabla n) - 2T\nabla \cdot (\nabla n \otimes \nabla n \cdot \bu), \nonumber
\end{align}
and 
\begin{align}
	\bu \cdot (\nabla\cdot\bpi) & = \bu \cdot (n\nabla \hat{\mu} + \left( \hat{s}\bI + 2\nabla n \otimes \nabla n \right)\nabla T) \label{equivalent energy eq 4}   \\
	& = n\bu \cdot \nabla \hat{\mu} + \hat{s}\bu \cdot \nabla T + 2 (\nabla T\cdot\nabla n)(\bu \cdot\nabla n).\nonumber
\end{align}
Equations \eqref{equivalent energy eq 1}-\eqref{equivalent energy eq 4} present the energy equation and, together with mass balance and \eqref{equivalent momentum eq}, yield the equivalent governing equations
\begin{align}
	& \frac{\partial n}{\partial t} + \nabla\cdot(n\bu) =0, \label{equiva 1} \\
	&R\left(n\frac{\partial \bu}{\partial t} + n\bu\cdot\nabla\bu\right)=R\nabla \cdot \boldsymbol{\sigma} - \left( n\nabla \hat{\mu} + \left( \hat{s}\bI + 2\nabla n \otimes \nabla n \right)\nabla T \right)- Gn\be_z, \label{equiva 2} \\
	& \frac{\partial e}{\partial t} + \hat{\mu}\nabla\cdot(n\bu) + T\nabla\cdot(\hat{s}\bu)  = R\boldsymbol{\sigma}:\nabla \bu + \nabla\cdot(n\nabla T) - 2T\nabla \cdot (\nabla n \otimes \nabla n \cdot \bu).\label{equiva 3}
\end{align}
The identities \eqref{nodim-energy-6} and \eqref{div pi 2} are already embedded implicitly in the equations \eqref{equiva 1}-\eqref{equiva 3}. The entropy increase can be demonstrated without recourse to the chain rule \eqref{nodim-energy-8}, which means it is ready for the discretization. 

\subsection{The spatial discretization}
A standard centered finite difference approximation in space is employed. For simplicity, we consider a square computational domain and a uniform spatial mesh: $\Omega = (0,L)^2$ and $h=L/N$, where $N$ is a given integer. The notation for the three-dimensional case can be extended naturally.

\begin{definition}[Mesh functions]
	For any positive integer $N$, define the following point sets:
	$$
	\mathcal{E}_{N}:=\{i \cdot h \mid i=0, \ldots, N \}, \quad \mathcal{C}_{N}:=\{(i-1 / 2) \cdot h \mid i=1, \ldots, N \},
	$$
	$$
	\mathcal{C}_{\bar{N}}:=\{(i-1 / 2) \cdot h \mid i=0, \ldots, N+1\}.
	$$
	The two points belonging to $\mathcal{C}_{\bar{N}} \setminus \mathcal{C}_{N}$ are the so-called ghost points, where the function values are determined by the boundary conditions.
	
	Define the function spaces
	$$
	\mathcal{C}_{\Omega}:=\{\phi: \mathcal{C}_{\bar{N}} \times \mathcal{C}_{\bar{N}} \rightarrow \mathbb{R}\}, \quad \mathcal{V}_{\Omega}:=\{\phi: \mathcal{E}_{N} \times \mathcal{E}_{N} \rightarrow \mathbb{R}\}
	$$
	$$
	\mathcal{E}_{\Omega}^{x}:=\{\phi: \mathcal{E}_{N} \times \mathcal{C}_{N}  \rightarrow \mathbb{R}\},\quad
	\mathcal{E}_{\Omega}^{y}:=\{\phi: \mathcal{C}_{N} \times \mathcal{E}_{N} \rightarrow \mathbb{R}\},\quad
	\mathcal{E}_{\Omega}:=\mathcal{E}_{\Omega}^{x} \times \mathcal{E}_{\Omega}^{y}.
	$$
\end{definition}
The functions in $\mathcal{C}_{\Omega}$ are called cell-centered functions. In component form, cell-centered functions are denoted by $\phi_{i,j}:=\phi(\xi_i, \xi_j)$, where $\xi_i:=(i-\frac12)\cdot h$. The functions in $\mathcal{E}_{\Omega}^{x}$, etc., are called face-centered functions. In component form, face-centered functions are denoted by $f_{i+\frac{1}{2},j}:= f(\xi_{i+\frac{1}{2}},\xi_j)$, etc.  
This staggered grid is also known as the marker-and-cell (MAC) grid and was first
proposed for the incompressible Navier-Stokes equations \cite{MAC-1965}. 
These different types of discrete functions can be generated from one another through average operators. 

\begin{definition}[Average operators]
	Define $A_x: \mathcal{C}_{\Omega} \rightarrow \mathcal{E}_{\Omega}^{x}$ and $a_x: \mathcal{E}_{\Omega}^{x} \rightarrow \mathcal{C}_{\Omega}$ by
	$$
	A_x \phi_{i+\frac{1}{2},j}:=\frac{1}{2}(\phi_{i+1,j}+\phi_{i,j}), \quad  a_x f_{i,j}:=\frac{1}{2}(f_{i+\frac{1}{2},j}+f_{i-\frac{1}{2},j}).
	$$
	The operators $A_y$ and $a_y$ are defined analogously, and further lead to the operator $A_h = (A_x,A_y)^T$.
	
	Define $A_{xy}: \mathcal{C}_{\Omega} \rightarrow \mathcal{V}_{\Omega}$ and $a_{xy}: \mathcal{V}_{\Omega} \rightarrow \mathcal{C}_{\Omega}$ by
	\begin{align}
		& A_{xy}\phi_{i+\frac{1}{2},j+\frac{1}{2}}:=\frac{1}{4}(\phi_{i+1,j}+\phi_{i,j}+\phi_{i+1,j+1}+\phi_{i,j+1}), \nonumber \\
		& a_{xy}f_{i,j}:=\frac{1}{4}( f_{i+\frac{1}{2},j+\frac{1}{2}}+f_{i-\frac{1}{2},j+\frac{1}{2}}+f_{i+\frac{1}{2},j-\frac{1}{2}}+f_{i-\frac{1}{2},j-\frac{1}{2}} ).\nonumber
	\end{align}
	
\end{definition}

The discrete boundary conditions associated with cell-centered and edge-centered functions are introduced in the following definition.
\begin{definition}[Boundary conditions]
	A discrete function $\phi \in \mathcal{C}_{\Omega}$ is said to satisfy homogeneous Neumann boundary conditions, written as $\boldsymbol{\nu} \cdot \nabla_{h} \phi=0$, if $\phi$ satisfies
	$$
	\begin{array}{ll}
		{\phi_{0,j}=\phi_{1,j},} & {\phi_{N,j}=\phi_{N+1,j},} 
		\\
		{\phi_{i,0}=\phi_{i,1},} & {\phi_{i,N}=\phi_{i,N+1}.} 
	\end{array}
	$$
	
	A discrete function $\phi \in \mathcal{C}_{\Omega}$ is said to satisfy Dirichlet boundary conditions, written as $\phi|_\Gamma=\phi_b$, if $\phi$ satisfies
	$$
	\frac{1}{2}(\phi_{0,j}+\phi_{1,j})=\frac{1}{2}(\phi_{N,j}+\phi_{N+1,j})=\frac{1}{2}(\phi_{i,0}+\phi_{i,1})=\frac{1}{2}(\phi_{i,N}+\phi_{i,N+1})=\phi_b.
	$$
	
	A discrete function $\boldsymbol{f}=(f^{x}, f^{y})^{T} \in \mathcal{E}_{\Omega}$ is said to satisfy homogeneous Dirichlet boundary conditions, or simply zero boundary conditions, if
	$$
	\begin{array}{llll}
		{f_{1 / 2, j}^{x}=0,}&{f_{N+1 / 2, j}^{x}=0,}& {f_{i+1/2,0}^{x}+f_{i+1/2,1}^{x}=0,} & {f_{i+1/2,N}^{x}+f_{i+1/2,N+1}^{x}=0,} 
		\\
		{f_{i, 1/2}^{y}=0,}&{f_{i, N+1/2}^{y}=0,}& {f_{0,i+1/2}^{y}+f_{1,i+1/2}^{y}=0,} & {f_{N,i+1/2}^{y}+f_{N+1,i+1/2}^{y}=0.} 
	\end{array}
	$$
\end{definition}

The centered difference operators are defined as follows.
\begin{definition}[Difference operators]
	Define $d_{x}: \mathcal{E}_{\Omega}^{x} \rightarrow \mathcal{C}_{\Omega}$ component-wise by
	\[
	d_{x} f_{i,j}:=\frac{1}{h}\left(f_{i+\frac{1}{2}, j}-f_{i-\frac{1}{2},j}\right),
	\]
	with $d_{y}: \mathcal{E}_{\Omega}^{y} \rightarrow \mathcal{C}_{\Omega}$ defined analogously. Then we define the discrete divergence by
	\[
	\nabla_{h} \cdot: \mathcal{E}_{\Omega} \rightarrow \mathcal{C}_{\Omega}, \quad\quad \nabla_{h} \cdot \boldsymbol{f}:=d_{x} f^{x}+d_{y} f^{y},
	\]
	where $\boldsymbol{f}=(f^x, f^y)^T \in \mathcal{E}_{\Omega}$.
	
	Define $D_{x}: \mathcal{C}_{\Omega} \rightarrow \mathcal{E}_{\Omega}^{x}$ component-wise by
	$$D_{x} \phi_{i+\frac{1}{2},j}:=\frac{1}{h}\left(\phi_{i+1,j}-\phi_{i, j}\right),$$
	with $D_{y}: \mathcal{C}_{\Omega} \rightarrow \mathcal{E}_{\Omega}^{y}$ defined analogously. Then the discrete gradient is given by
	$$\nabla_{h}: \mathcal{C}_{\Omega} \rightarrow \mathcal{E}_{\Omega},\quad\quad \nabla_{h} \phi:=\left(D_{x} \phi, D_{y} \phi \right)^{T} .$$
	
	The standard discrete Laplace operator is
	$$\Delta_{h}: \mathcal{C}_{\Omega} \rightarrow \mathcal{C}_{\Omega}, \quad\quad \Delta_{h} \phi:=\nabla_{h} \cdot \nabla_{h} \phi.$$
\end{definition}

We are now ready to define the following grid inner products and norms.

\begin{definition}[Inner products]
	$$(\phi, \psi):=h^2 \sum_{i=1}^{N} \sum_{j=1}^{N}\phi_{i,j} \psi_{i,j}, \quad\quad\quad\quad \forall \phi, \psi \in \mathcal{C}_{\Omega} , $$
	$$[f, g]_{x}:=\frac{1}{2} h^2 \sum_{i=1}^{N} \sum_{j=1}^{N} (f_{i+\frac{1}{2}, j} g_{i+\frac{1}{2}, j}+f_{i-\frac{1}{2}, j} g_{i-\frac{1}{2},j}), \quad\quad\quad\quad \forall f,g \in \mathcal{E}_{\Omega}^x ,$$
	and $[\cdot,\cdot]_y$ is defined analogously.
	For $\boldsymbol{f}=(f^x,f^y)^T$, $\boldsymbol{g}=(g^x,g^y)^T$, with $\boldsymbol{f},\boldsymbol{g}\in \mathcal{E}_{\Omega}$, define the inner product by
	$$(\boldsymbol{f}, \boldsymbol{g}):=[f^{x}, g^{x}]_{x}+[f^{y}, g^{y}]_{y}. $$
\end{definition}

\begin{definition}[Norms]
	For $\boldsymbol{f}\in \mathcal{E}_{\Omega}$, define the norm induced by the inner product by
	$$\|\boldsymbol{f}\|_2=\sqrt{(\boldsymbol{f},\boldsymbol{f})}.$$
	
	In addition, for $\phi\in\mathcal{C}_{\Omega}$, define
	$$\|\phi\|_{\infty}:=\max _{i,j}|\phi_{i,j}|,$$
	$$\|\phi\|_p:=(|\phi|^p,1)^{\frac{1}{p}},\quad\quad 1\leq p < \infty,$$
	$$\left\|\nabla_{h} \phi\right\|_{p}:=\left(\left[\left|D_{x} \phi\right|^{p}, 1\right]_{x}+\left[\left|D_{y} \phi\right|^{p}, 1\right]_{y}\right)^{\frac{1}{p}}.$$
	It is obvious that $(\nabla_{h} \phi,\nabla_{h} \phi)=\|\nabla_{h} \phi\|_2^2$ when $p=2$.
\end{definition}

For $\phi,\psi \in \mathcal{C}_{\Omega}$ and vector fields $\bof \in \mathcal{E}_{\Omega}$, the following summation-by-parts formulas can be derived through standard calculations.
\begin{lem}[Summation-by-parts formulas]
	Suppose $\phi,\psi \in \mathcal{C}_{\Omega}$ and the velocity vector field $\boldsymbol{f} \in \mathcal{E}_{\Omega}$. Then we have:
	
	(1) If $\psi$ satisfies the homogeneous Neumann boundary condition $\boldsymbol{\nu} \cdot \nabla_{h} \psi=0$, then 
	$$(\phi,\nabla_h\cdot(\bof \nabla_h\psi))=-(\nabla_h\phi,\bof \nabla_h\psi).$$
	
	(2) If $\boldsymbol{f}$ satisfies the homogeneous Dirichlet boundary condition, then
	$$(\phi, \nabla_h\cdot\boldsymbol{f})=-(\nabla_h\phi, \boldsymbol{f}).$$
	To avoid ambiguity, we clarify the product without symbol: 
    $$\boldsymbol{fg}:=(f^xg^x,f^yg^y)^T, \quad \text{for } \boldsymbol{f}=(f^x,f^y)^T, \boldsymbol{g}=(g^x,g^y)^T \in \mathcal{E}_{\Omega}. $$
\end{lem}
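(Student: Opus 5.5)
Both identities follow from the discrete product rule for the staggered difference operators, i.e. an Abel summation in each coordinate direction. The boundary terms that appear are then killed by the stated boundary conditions. Since the inner products are sums of separate $x$ and $y$ contributions, and $\nabla_h\cdot$ and $\nabla_h$ split as $d_x+d_y$ and $(D_x,D_y)^T$, we treat only the $x$-direction. The $y$-direction is identical. Throughout, we fix the row index $j$.

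For (2), we write
\[
h\sum_{i=1}^N \phi_{i,j}\,(f^x_{i+\frac12,j}-f^x_{i-\frac12,j}).
\]
Shifting the index in the second sum gives
\[
-h\sum_{i=1}^{N-1}(\phi_{i+1,j}-\phi_{i,j})\,f^x_{i+\frac12,j}\;+\;\phi_{N,j}f^x_{N+\frac12,j}-\phi_{1,j}f^x_{\frac12,j}.
\]
The homogeneous Dirichlet condition $f^x_{\frac12,j}=f^x_{N+\frac12,j}=0$ removes the two boundary terms. For the same reason we may add the vanishing terms with $i=0$ and $i=N$. The remaining sum is then $-h^2\sum_{i=0}^{N} D_x\phi_{i+\frac12,j}\,f^x_{i+\frac12,j}$. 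Multiplying by $h$ and summing over $j$, this is exactly $-[D_x\phi,f^x]_x$. Here we must check that the definition of $[\cdot,\cdot]_x$, which averages the left and right faces of each cell, reduces to this face sum. Every interior face is shared by two cells and so receives weight $\tfrac12+\tfrac12=1$. The boundary faces receive weight $\tfrac12$ but carry $f^x=0$. Hence the face sum agrees with $[D_x\phi,f^x]_x$.

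For (1), we apply the same computation to the face function $g^x:=f^xD_x\psi$ in place of $f^x$. The boundary terms now involve $f^x_{\frac12,j}D_x\psi_{\frac12,j}$ and $f^x_{N+\frac12,j}D_x\psi_{N+\frac12,j}$. These vanish because the Neumann condition gives $\psi_{0,j}=\psi_{1,j}$ and $\psi_{N,j}=\psi_{N+1,j}$, i.e. $D_x\psi=0$ on the boundary faces, regardless of any condition on $\bof$. The same weighting argument then identifies the result with $-[D_x\phi,f^xD_x\psi]_x$. Adding the $y$ part gives $-(\nabla_h\phi,\bof\nabla_h\psi)$, with the component-wise product defined as in the statement.

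The only delicate step is this bookkeeping between the averaged edge inner product $[\cdot,\cdot]_x$ and the plain face sum produced by summation by parts. In each case we verify that the boundary faces, which enter with weight $\tfrac12$, are multiplied by a vanishing factor. This is $f^x$ in (2) and $D_x\psi$ in (1). The interior faces automatically receive weight one.
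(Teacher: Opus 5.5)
Your argument is correct. It is exactly the standard Abel-summation calculation the paper invokes (``standard calculations'') without writing out, and you handle the half-weighted boundary faces of $[\cdot,\cdot]_x$ correctly in both parts. The only slip is in the powers of $h$, which you should fix: the boundary terms after the index shift should carry a factor $h$, and the row sum $-h^2\sum_{i=0}^{N} D_x\phi_{i+\frac12,j}f^x_{i+\frac12,j}$ is already correctly scaled, so you should simply sum over $j$ without the extra ``multiplying by $h$''.
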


\subsection{The decoupled component-wise convex-splitting scheme}
The idea to develop an entropy-stable scheme is based on the component-wise convex-splitting method \cite{Kou-18,Kou-18b}. The entropy without the gradient contribution is given by
\begin{align*}
	s(n,e)=-n\left(\frac{3}{2}\ln n+\ln\frac{n}{1-n}+M\right)+\frac{3}{2}n\ln(e+n^2)=:f(n)+g(n,e).
\end{align*}
By physical definition, the number density $0<n<1$, and the internal energy $e$ is positive. This leads to a component-wise convex-concave decomposition of the entropy function: 
\begin{align}
	f(n)=-n\left(\frac{3}{2}\ln n+\ln\frac{n}{1-n}+M\right), \quad g(n,e)=\frac{3}{2}n\ln(e+n^2)\nonumber
\end{align}
where $f$ is concave w.r.t. $0<n<1$, $g$ is concave w.r.t. $e>0$ and convex with respect to $n>0$.
For the discrete variational derivative, a component-wise procedure is considered:
\begin{align*}
	\frac{s(n^{k+1},e^{k+1})-s(n^{k},e^{k})}{\dt} &= \frac{s(n^{k+1},e^{k})-s(n^{k},e^{k})}{\dt} +\frac{s(n^{k+1},e^{k+1})-s(n^{k+1},e^{k})}{\dt} \\
	&=:I+II.
\end{align*}
By the convexity of functions $f$ and $g$, we have :
\begin{align}
	& I \geq \frac{\partial s^{k+1}}{\partial n} \frac{n^{k+1}-n^{k}}{\dt}, \quad II \geq \frac{\partial s^{k+1}}{\partial e} \frac{e^{k+1}-e^{k}}{\dt}, \label{convex 1} \\
	& \frac{\partial s^{k+1}}{\partial n}:=f^\prime(n^{k+1})+\frac{\partial g}{\partial n}(n^{k},e^{k}), \quad \frac{\partial s^{k+1}}{\partial e}:=\frac{\partial g}{\partial e}(n^{k+1},e^{k+1})=\frac{1}{T^{k+1}}.\label{convex 2}
\end{align}
\eqref{convex 1} and \eqref{convex 2} lead to the convex-splitting scheme where the convex part is updated explicitly, while the concave part is treated implicitly. Given $n^k>0$, $e^k>0$, and $\bu^k$, solve for $n^{k+1}$, $e^{k+1}$, $T^{k+1}$, and $\bu^{k+1}$ such that
\begin{align}
	& \frac{n^{k+1} - n^k}{\dt} + \nabla_h \cdot (A_h n^{k+1} \bu^k) = 0, \label{decoupled scheme 1} \\
	& \frac{e^{k+1} - e^k}{\dt} + T^{k+1} \nabla_h \cdot \left( A_h \hat{s}^{k} \bu^k \right) + \hat{\mu}^{k+1} \nabla_h \cdot \left( A_h n^{k+1} \bu^k \right)      \label{decoupled scheme 2} \\
	&\quad\quad\quad = R \bsigma^{k}_* :\nabla_h \bu^{k} +\nabla_h \cdot \left( A_h n^k \nabla_h T^{k+1} \right) \nonumber \\
    &\quad\quad\quad\quad\quad\quad - 2 T^{k+1} \nabla_h \cdot \left( \nabla_h n^{k+1}\otimes\nabla_h n^{k+1} \cdot \bu^k \right) ,\nonumber \\
	& \hat{\mu}^{k+1} = -T^{k+1}\left( f^\prime(n^{k+1})+\frac{\partial g}{\partial n}(n^{k},e^{k}) + 2 \Delta_h n^{k+1} \right), \label{decoupled scheme 2.1} \\
	& T^{k+1} = \frac{2}{3}\frac{e^{k+1} + (n^{k+1})^2}{n^{k+1}} , \quad \bsigma^{k}_* = n^{k+1} \left( \nabla_h \bu^{k} + (\nabla_h \bu^{k})^T + \frac{1}{3}\nabla_h \cdot \bu^{k}\bI\right), \label{decoupled scheme 3} \\
	& R A_hn^{k+1} \left( \frac{\bu^{k+1} - \bu^k}{\dt} + \bu^k \cdot \nabla_h \bu^{k+1} \right) = R \nabla_h \cdot \bsigma^{k+1}- G A_y n^{k+1} \be_y  \label{decoupled scheme 4} \\
	& \quad\quad - \left( A_h n^{k+1}\nabla_h \hat{\mu}^{k+1} + A_h\hat{s}^{k}\nabla_h T^{k+1} + 2\nabla_h n^{k+1} \otimes \nabla_h n^{k+1}\cdot \nabla_h T^{k+1} \right), \nonumber \\
	& \bsigma^{k+1} = n^{k+1} \left( \nabla_h \bu^{k+1} + (\nabla_h \bu^{k+1})^T+ \frac{1}{3}\nabla_h \cdot \bu^{k+1}\bI \right). \label{decoupled scheme 5} 
\end{align}
The boundary conditions are $\bu^{k+1} |_\Gamma = \boldsymbol{0}$ and $\bnu\cdot \nabla_h n^{k+1} = 0$. The temperature may satisfy either $\bnu\cdot\nabla_h T^{k+1} = 0$ or the Dirichlet boundary condition $T|_\Gamma=T_b$. The boundary condition of $e^{k+1}$ and $\hat{\mu}^{k+1}$ is determined by $n^{k+1}$ and $T^{k+1}$. In detail, for the centered difference scheme on staggered grids, some interpolations are required in the tensor computations: given $\bu=(u^x,u^y)$, $\bv=(v^x,v^y)$, $\bof=(f^x,f^y) \in \mathcal{E}_{\Omega}$ 
\begin{align}
	& \bu \cdot \nabla_h \bv :=\binom{\ u^x \cdot D_x a_x(v^x)+A_{x y} u^y \cdot d_y A_y(v^x)\ }{\ A_{x y} u^x \cdot d_x A_x(v^y)+ u^y \cdot D_y a_y(v^y)\ }, \label{interpolations 1} \\
	& \nabla_h n^{k+1} \otimes \nabla_h n^{k+1} \cdot \bof :=A_h \left(a_x(D_x n^{k+1}f^x)+a_y(D_yn^{k+1}f^y)\right) \nabla_h n^{k+1}, \label{interpolations 2} \\
	& \bsigma^{k}_* :\nabla_h \bu^{k}:=2n^{k+1}\left( (d_x u^{x,k})^2+(d_y u^{y,k})^2  \right)+n^{k+1}a_{xy}\left( D_x u^{y,k} + D_y u^{x,k}  \right)^2 \label{interpolations 3} \\
	& \quad\quad\quad\quad\quad\quad+ \frac{1}{3}n^{k+1}(\nabla_h \cdot \bu^k)^2, \nonumber \\
	& \nabla_h \cdot \bsigma^{k+1} := \frac{1}{3}\nabla_h ( n^{k+1}\nabla_h\cdot\bu^{k+1} ) \label{interpolations 4} \\
    & \quad +\binom{\  2D_x\left( n^{k+1}a_x(d_x u^{x,k+1}) \right)+d_y \left( A_{xy}n^{k+1}( D_x u^{y,k+1}+D_y u^{x,k+1} ) \right) \ }{\ d_x \left( A_{xy}n^{k+1}( D_x u^{y,k+1}+D_y u^{x,k+1} )\right) + 2D_y\left( n^{k+1}a_y(d_y u^{y,k+1}) \right) \ } \nonumber 
\end{align}

In implementation, the numerical scheme is divided into three steps: (i) solve the mass equation \eqref{decoupled scheme 1} for $n^{k+1}$; (ii) solve the energy equations \eqref{decoupled scheme 2}-\eqref{decoupled scheme 3} for $e^{k+1}$ and $T^{k+1}$; (iii) solve the momentum equations \eqref{decoupled scheme 4}-\eqref{decoupled scheme 5} for the velocity $\bu^{k+1}$. All three parts are linear. The key feature is the decoupled framework. Since the number density is determined from the mass conservation equation before solving the energy equation, the logarithmic term does not enter a coupled nonlinear solve. The linear dependence of temperature on internal energy holds once the density is fixed. In addition, the momentum equation does not directly affect the entropy analysis, which makes a fully decoupled numerical scheme possible. Considering the numerical stability, the semi‑implicit method is applied for the velocity. 
It is worth to mention that that, for a prescribed velocity field, solving the mass conservation equation with central differences and a fully implicit time discretization might lead to numerical oscillations near the interface. In the proposed scheme \eqref{decoupled scheme 1}-\eqref{decoupled scheme 5}, although the mass equation is solved in a decoupled manner, the stability induced by entropy production prevents such numerical oscillations from appearing in the simulations reported below. 
In addition, since the logarithmic term is treated essentially explicitly, the positivity analysis cannot be established. This is a trade-off inherent in the construction of a linear scheme and will be left for future work.

Mass conservation follows naturally from the summation‑by‑parts identity, which is made possible by the homogeneous Dirichlet boundary condition imposed on velocity. Therefore, we do not provide a separate proof. The theorem below demonstrates that the scheme satisfies a discrete entropy-production inequality without a time-step restriction. The argument is mainly based on the convex–concave splitting of the entropy function. The tensor calculus \eqref{interpolations 1}-\eqref{interpolations 4} on the staggered grid also supply the appropriate boundary conditions and positivity required for summation‑by‑parts formula. 
\begin{theorem}\label{thm:entropy}
	The homogeneous Neumann conditions are enforced for density and internal energy, and homogeneous Dirichlet conditions for velocity. Ignore the gravity as $G=0$. Assume that the numerical solutions $n^{k+1}$ and $e^{k+1}$ are positive point-wisely, then the proposed scheme \eqref{decoupled scheme 1}-\eqref{decoupled scheme 5} satisfies the entropy production law
	\begin{align}
		\frac{\hat{S}^{k+1}-\hat{S}^{k}}{\dt}  \geq \left( \frac{R}{T^{k+1}}, \bsigma^{k}_* :\nabla_h \bu^{k} \right) -  \left(  \nabla_h \frac{1}{T^{k+1}}, A_h n^k\nabla_h T^{k+1} \right) \geq 0, \label{entropy production 2}
	\end{align}
	where 
	\begin{equation}
		\hat{S}^k = (s^k,1)-\| \nabla_h n^k \|^2.\nonumber
	\end{equation}
\end{theorem}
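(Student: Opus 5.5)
Our plan is to mimic the continuous argument of Theorem \ref{non-dimensional entropy and energy}, replacing every chain-rule step by the discrete convexity inequalities \eqref{convex 1}--\eqref{convex 2} and by summation-by-parts.

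\textbf{Step 1: local convexity bound.} First split the entropy difference as $I+II$ and apply \eqref{convex 1} pointwise; this is where positivity of $n^{k+1},e^{k+1}$ enters, so that $f$, $g$ stay in their domains of concavity/convexity. For the gradient part, use $-\|\nabla_h n^{k+1}\|^2+\|\nabla_h n^k\|^2\ge -2(\nabla_h n^{k+1},\nabla_h(n^{k+1}-n^k)) = 2(\Delta_h n^{k+1}, n^{k+1}-n^k)$, by summation-by-parts (1) with the Neumann condition. Combining with \eqref{decoupled scheme 2.1} gives
\[
\frac{\hat S^{k+1}-\hat S^k}{\dt}\ \ge\ \Bigl(\frac{1}{T^{k+1}},\frac{e^{k+1}-e^k}{\dt}\Bigr)-\Bigl(\frac{\hat\mu^{k+1}}{T^{k+1}},\frac{n^{k+1}-n^k}{\dt}\Bigr).
\]

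\textbf{Step 2: insert the scheme.} Substitute \eqref{decoupled scheme 1} into the second term; it becomes $+(\hat\mu^{k+1}/T^{k+1},\nabla_h\cdot(A_hn^{k+1}\bu^k))$. Divide \eqref{decoupled scheme 2} by $T^{k+1}$ (positive by \eqref{decoupled scheme 3}) and pair with $1$. The term $\hat\mu^{k+1}\nabla_h\cdot(A_hn^{k+1}\bu^k)/T^{k+1}$ from the energy equation cancels exactly the mass-equation contribution; this is the design of the reformulation \eqref{equiva 3}. What remains is
\[
-(1,\nabla_h\cdot(A_h\hat s^k\bu^k)) -2(1,\nabla_h\cdot(\nabla_h n^{k+1}\otimes\nabla_h n^{k+1}\cdot\bu^k)) + \Bigl(\frac{R}{T^{k+1}},\bsigma^k_*:\nabla_h\bu^k\Bigr)+\Bigl(\frac{1}{T^{k+1}},\nabla_h\cdot(A_hn^k\nabla_hT^{k+1})\Bigr).
\]
The first two are divergences of face fields vanishing on the boundary. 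The first vanishes because $\bu^k$ has zero normal components. For the second, the flux defined in \eqref{interpolations 2} is $A_h(\cdot)\nabla_h n^{k+1}$, and $D_xn^{k+1}=0$ at boundary $x$-faces by the Neumann condition. Hence both vanish by summation-by-parts (2) with $\phi\equiv1$. The last term equals $-(\nabla_h\frac1{T^{k+1}},A_hn^k\nabla_hT^{k+1})$ by summation-by-parts (1) under the Neumann condition on $T$. This yields the first inequality in \eqref{entropy production 2}.

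\textbf{Step 3: sign.} The term $\bsigma^k_*:\nabla_h\bu^k$ in \eqref{interpolations 3} is a sum of squares times $n^{k+1}>0$, and $R,T^{k+1}>0$, so it is nonnegative. For the heat term, on each face we have $D_x(1/T)\,D_xT=-(T_{i+1}-T_i)^2/(h^2T_iT_{i+1})\le0$, and $A_xn^k>0$. This requires $n^k>0$, which holds inductively from the previous step's hypothesis. Hence $-(\nabla_h\frac1T,A_hn^k\nabla_hT)\ge0$.

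\textbf{Main obstacle.} We expect the hardest part to be the boundary bookkeeping in Step 2, specifically checking that the staggered interpolations \eqref{interpolations 2} and the ghost-point values of $e,\hat\mu,T$ leave no residual boundary flux. The cancellation of the $\hat\mu$ terms is automatic once both are written with the same $A_hn^{k+1}\bu^k$.
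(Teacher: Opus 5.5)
Your proposal is correct and follows the paper's proof step for step. The steps are: the component-wise convexity bound together with the gradient-term estimate; substitution of \eqref{decoupled scheme 1}--\eqref{decoupled scheme 2} so that the $\hat\mu^{k+1}$ terms cancel; elimination of the two pure-divergence terms and conversion of the heat term by summation by parts; and the face-by-face sign computation. You also make explicit several points the paper passes over: the bound $\|a\|^2-\|b\|^2\le 2(a,a-b)$ for the gradient part, why the flux in \eqref{interpolations 2} has zero normal component on boundary faces, and the nonnegativity of $\bsigma^k_*:\nabla_h\bu^k$. The one thing to add is that concavity of $f$, and the very definition of $s^{k+1}$, also require $n^{k+1}<1$, which the paper likewise leaves implicit.
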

\begin{proof}
	Based on the idea of the convex-splitting method, we calculate the difference of the entropy between two adjacent time step:
	\begin{align}
		&\frac{\hat{S}^{k+1}-\hat{S}^{k}}{\dt} \geq \left( \frac{e^{k+1}-e^{k}}{\dt}, \frac{1}{T^{k+1}} \right) \nonumber \\
        & \quad\quad\quad\quad  +\left( \frac{n^{k+1}-n^{k}}{\dt}, \frac{\partial s^{k+1}}{\partial n} \right) + \left( \frac{n^{k+1}-n^{k}}{\dt}, 2\Delta_h n^{k+1} \right). \nonumber
	\end{align}
	The mass equation and the energy equation imply that
	\begin{align}
		&\frac{\hat{S}^{k+1}-\hat{S}^{k}}{\dt} \geq \left( \nabla_h\cdot(A_hn^{k+1} \bu^k), \frac{\hat{\mu}^{k+1}}{T^{k+1}} \right)  - \left( \nabla_h \cdot \left( A_h n^{k+1} \bu^k \right),\frac{\hat{\mu}^{k+1}}{T^{k+1}} \right) \nonumber \\
        & \quad\quad\quad\quad\quad - \left( \nabla_h \cdot (A_h\hat{s}^k \bu^k),1 \right) -2\left( \nabla_h \cdot \left( \nabla_h n^{k+1}\otimes\nabla_h n^{k+1} \cdot \bu^k \right),1 \right) \nonumber\\
		& \quad\quad\quad\quad\quad + R \left( \frac{1}{T^{k+1}}, \bsigma^{k}_* :\nabla_h \bu^{k} \right) +  \left( \frac{1}{T^{k+1}}, \nabla_h\cdot(A_h n^k\nabla_h T^{k+1}) \right) \nonumber \\
		& \quad=  \left( \frac{R}{T^{k+1}}, \bsigma^{k}_* :\nabla_h \bu^{k} \right) +  \left( \frac{1}{T^{k+1}}, \nabla_h\cdot(A_h n^k\nabla_h T^{k+1}) \right)  \nonumber \\
		& \quad= \left( \frac{R}{T^{k+1}}, \bsigma^{k}_* :\nabla_h \bu^{k} \right) -  \left(  \nabla_h \frac{1}{T^{k+1}}, A_h n^k\nabla_h T^{k+1} \right).\nonumber
	\end{align}
	Here, the summation-by-parts formula under the homogeneous Neumann boundary condition and zero boundary condition is applied. In particular, the stress operation $\nabla_h n^{k+1}\otimes\nabla_h n^{k+1} \cdot \bu^k$ is designed to satisfy the Neumann boundary condition based in the definition \eqref{interpolations 2}. A further calculation shows the positivity:
	\begin{align}
		 -[D_x\frac{1}{T^{k+1}}, A_x n^k D_x T^{k+1}]_{x} =& \frac{1}{2} \sum_{i,j=1}^{N} \frac{(T_{i+1,j}^{k+1}-T_{i,j}^{k+1})^2}{T_{i+1,j}^{k+1}T_{i,j}^{k+1}} A_x n^{k}_{i+\frac{1}{2}, j} \label{gradient 1} \\
        & +\frac{1}{2} \sum_{i,j=1}^{N} \frac{(T_{i-1,j}^{k+1}-T_{i,j}^{k+1})^2}{T_{i-1,j}^{k+1}T_{i,j}^{k+1}} A_x n^{k}_{i-\frac{1}{2}, j} \geq 0, \nonumber \\
		 -[D_y\frac{1}{T^{k+1}}, A_y n^k D_y T^{k+1}]_{y} = &\frac{1}{2} \sum_{i,j=1}^{N} \frac{(T_{i,j+1}^{k+1}-T_{i,j}^{k+1})^2}{T_{i,j+1}^{k+1}T_{i,j}^{k+1}} A_y n^{k}_{i, j+\frac{1}{2}} \label{gradient 2} \\
        &  +\frac{1}{2} \sum_{i,j=1}^{N}\frac{(T_{i,j-1}^{k+1}-T_{i,j}^{k+1})^2}{T_{i,j-1}^{k+1}T_{i,j}^{k+1}} A_y n^{k}_{i, j-\frac{1}{2}} \geq 0. \nonumber 
    \end{align}
    The inequality \eqref{gradient 1} and \eqref{gradient 2} lead to the required positivity
    \begin{align}
        -  \left(  \nabla_h \frac{1}{T^{k+1}}, A_h n^k\nabla_h T^{k+1} \right) \geq 0. \nonumber
    \end{align}
	Consequently, the entropy production law \eqref{entropy production 2} is proved.
\end{proof}

\begin{remark}
	For the dynamic van der Waals model, the entropy production can be fulfilled by the classical methods for phase-field models. However, energy conservation at the fully discrete level is more challenging. There are two issues. The temporal issue arises from the decoupling strategy: the velocity treated explicitly in the mass equation is inconsistent with that in the momentum equation. As a result, it is difficult for decoupled schemes to preserve energy conservation, even in the semi-discrete sense. A semi-discrete scheme was designed in \cite{Kou-18b}. Although energy conservation is maintained via artificial dissipation, a decoupled solution strategy is not feasible. 
	Spatially, the difficulty stems from the centered difference discretization on staggered grids. The introduction of averaging operators destroys the key integration-by-parts identities and the chain rules, which is essential. To the best of our knowledge, no numerical work has yet been reported that can preserve both energy conservation and entropy production in the fully discrete level.
\end{remark}

\section{Numerical simulations} \label{sec: simulations}
In this section, the convergence test of the numerical scheme and simulations of non-isothermal single-component two-phase flow problems are proposed. In the first, we denote the critical number density and critical temperature as $n_c$ and $T_c$, respectively. The theoretical values are calculated in \cite{Onuki-PRE-05,Onuki-PRE-07} as $n_c=\frac{1}{3v_0}$ and $T_c=\frac{8\varepsilon}{27k_B}$ which lead to the non-dimensional version $n_c=\frac{1}{3}$, $T_c=\frac{8}{27}$ (use the same notations without ambiguity) under the scale described in Section \ref{sec:dimensionless}. The initial data for the following numerical simulations will be set according to these reference values. In the convergence test, the choice of the initial values \eqref{initial n}-\eqref{initial u} also takes into account the magnitudes of the physical quantities.

\subsection{Convergence test}
In this part, we use smooth initial data to test the convergence rates of the numerical scheme \eqref{decoupled scheme 1}-\eqref{decoupled scheme 5}, with no gravity ($G=0$), on the isolated domain $\Omega=(0,L)^2$, where $L=100$ and $R=0.5$. The initial data are chosen to be trigonometric:
\begin{align}
	& n^0(x,y)=(\cos(2\pi y/100)\cos(2\pi x/100)+2)/18, \label{initial n}\\
	& T^0(x,y)=(\cos(2\pi y/100)\cos(2\pi x/100)+2)/18+0.2, \label{initial T}\\
	& \bu^0 = \boldsymbol{0}. \label{initial u}
\end{align}
Based on the decoupling property and the use of centered differences, the numerical algorithm is expected to be first-order accurate in time and second-order accurate in space. As the exact solution is unavailable, the Cauchy difference on adjacent grids is employed to calculate the temporal and spatial convergence orders, respectively. The definition is given as 
\begin{equation}
	\delta_{\phi} := \phi_{h_c} - I_c^f (\phi_{h_f}), \quad \phi = n \text{ or } T \nonumber
\end{equation}
where $h_f$ denotes the numerical solution on the fine spatial (or temporal) mesh and $h_c$ is the one on the coarse spatial (or temporal) mesh. Here we set $h_c=2h_f$. In fact, $I_c^f$ is a bilinear projection, mapping the solution from fine grid to coarse grid. 
For the spatial order, we fix $\dt=0.01$ and change space step as $h=L/N$, $N\in [16,32,64,128,256,512]$. The Cauchy differences at final time $T_{final}=10$ and corresponding numerical convergence orders are presented in Tables \ref{space order}, both of which are consistent with the expected results.
\begin{table}[t]
	\centering
	\begin{tabular}{llllll}
		\hline
		\textbf{Density}     & 32-16     & 64-32     & 128-64    & 256-128   & 512-256   \\ \hline
		$\ell^2$ error     & 2.667E-02 & 6.668E-03 & 1.654E-03 & 4.101E-04 & 1.022E-04 \\
		$\ell^2$ order     &           & 2.000     & 2.011     & 2.012     & 2.005     \\
		$\ell^\infty$ error & 5.136E-04 & 1.325E-04 & 3.339E-05 & 8.365E-06 & 2.092E-06 \\
		$\ell^\infty$ order &           & 1.955     & 1.989     & 1.997     & 1.999     \\ \hline
	\end{tabular}
	
	\vspace{5mm}
	\begin{tabular}{llllll}
		\hline
		\textbf{Temperature}      & 32-16     & 64-32     & 128-64    & 256-128   & 512-256   \\ \hline
		$\ell^2$ error     & 2.661E-02 & 6.619E-03 & 1.618E-03 & 3.958E-04 & 9.818E-05 \\
		$\ell^2$ order     &           & 2.007     & 2.032     & 2.031     & 2.011     \\
		$\ell^\infty$ error & 5.158E-04 & 1.331E-04 & 3.355E-05 & 8.403E-06 & 2.102E-06 \\
		$\ell^\infty$ order &           & 1.954     & 1.988     & 1.997     & 1.999     \\ \hline
	\end{tabular}
    \vspace{5mm}
	\caption{The spatial numerical convergence order of the number density $n$ (top) and the temperature $T$ (bottom), with the initial data \eqref{initial n}-\eqref{initial u}, fixed time step $\dt=0.01$ and final time $T_{final}=10$.}\label{space order}
\end{table}

As for the temporal order, the space step is fixed as $h=L/256$ and the time steps are chosen as $\dt=1/S$, $S \in [1,2,4,8,16,32]$. The final time remains as $T_{final}=10$. The results are shown in Table \ref{time order}. This verifies the first-order convergence in time.
\begin{table}[t]
	\centering
	\begin{tabular}{llllll}
		\hline
		\textbf{Density}        & 20-10         & 40-20     & 80-40     & 160-80    & 320-160   \\ \hline
		$\ell^2$ error         & 1.583E-02     & 8.220E-03 & 4.194E-03 & 2.119E-03 & 1.065E-03 \\
		$\ell^2$ order         &               & 0.945     & 0.971     & 0.985     & 0.992     \\
		$\ell^\infty$ error     & 4.286E-04     & 2.272E-04 & 1.172E-04 & 5.952E-05 & 3.000E-05 \\
		$\ell^\infty$ order  &   & 0.916         & 0.955     & 0.978    & 0.988     \\ \hline
	\end{tabular}
	
	\vspace{5mm}
	\begin{tabular}{llllll}
		\hline
		\textbf{Temperature}    & 20-10         & 40-20     & 80-40     & 160-80    & 320-160   \\ \hline
		$\ell^2$ error         & 4.892E-02     & 2.527E-02 & 1.286E-02 & 6.487E-03 & 3.258E-03 \\
		$\ell^2$ order         &               & 0.953  & 0.975  & 0.987  & 0.994  \\
		$\ell^\infty$ error     & 1.128E-03     & 5.997E-04 & 3.097E-04 & 1.574E-04 & 7.939E-05 \\
		$\ell^\infty$ order   &  & 0.911  & 0.953   & 0.976  & 0.987  \\ \hline
	\end{tabular}
    \vspace{5mm}
	\caption{The temporal numerical convergence order of the number density $n$ (top) and the temperature $T$ (bottom), with the initial data \eqref{initial n}-\eqref{initial u}, fixed space step $h=L/256$ and final time $T_{final}=10$.} \label{time order}
\end{table}
\subsection{Droplet evaporation}
In this test case, we present a benchmark numerical study of droplet evaporation. A relatively large isolated system is used to approximate an open environment.
For the boundary conditions, $n$ and $e$ satisfy homogeneous Neumann boundary conditions, while the velocity satisfies homogeneous Dirichlet boundary conditions. The computational domain is $\Omega = (0,L)^2$ with $L=200$, the time step is $\dt=10^{-2}$, the spatial step is $h=L/256$, and $R=0.02$. Gravity is ignored as $G=0$. The initial data describe a droplet located at the center of the domain with number density $n_l=0.58n_c$ and lower temperature $T_l=0.875T_c$, while the surrounding gas has density $n_g=0.122n_c$ and higher temperature $T_g=T_c$:
\begin{align}
	& f(x,y,a_{out},a_{in})=\frac{a_{out}-a_{in}}{2}\left(\tanh (\sqrt{(x-100)^2+(y-100)^2}-20)+1\right)+a_{in}. \nonumber  \\
	& n^0(x,y) = f(x,y,0.122n_c,0.58n_c), \quad T^0(x,y) = f(x,y,T_c,0.875T_c),\nonumber \\
    & \bu^0 = \boldsymbol{0}.\nonumber
\end{align}
Figure \ref{fig:Droplet n and T} shows the evolution of the number density and temperature. As can be seen, the interface of the droplet progressively thickens and gradually merges into the surrounding environment, while the temperature exhibits similar diffusive behavior. Figure \ref{fig:Droplet entropy} illustrates the preservation of entropy production, mass conservation, and energy conservation.
\begin{figure}[t]
	\centering
	\includegraphics[scale=0.21]{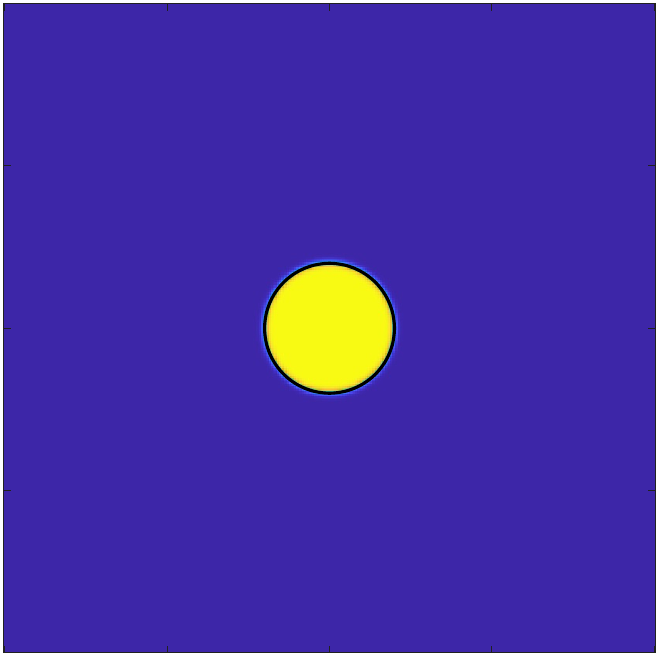}
	\includegraphics[scale=0.21]{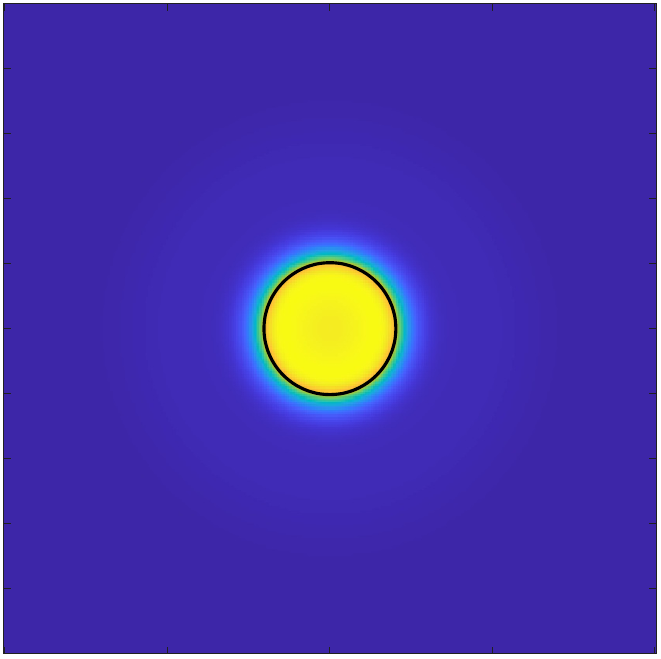}
	\includegraphics[scale=0.21]{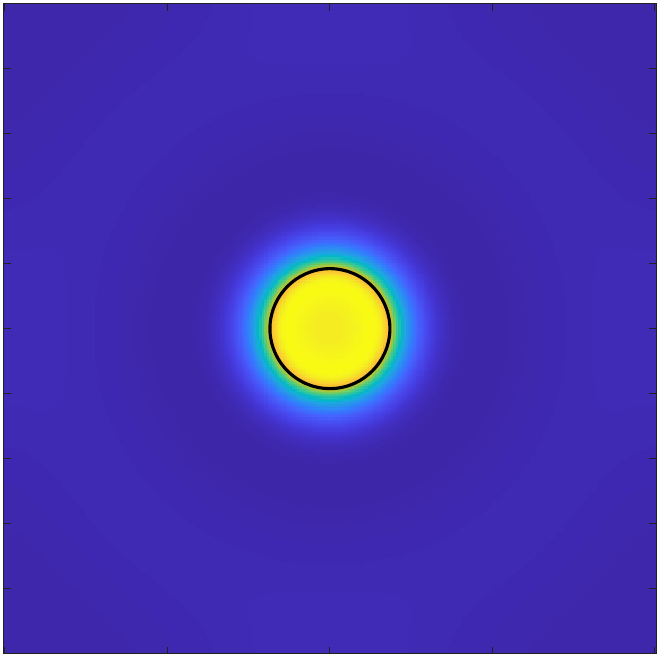}
	\includegraphics[scale=0.21]{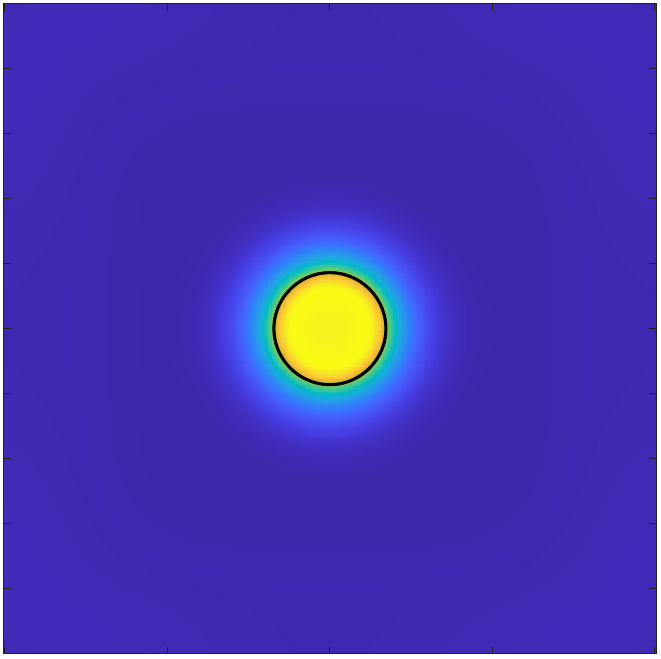}
	\includegraphics[scale=0.21]{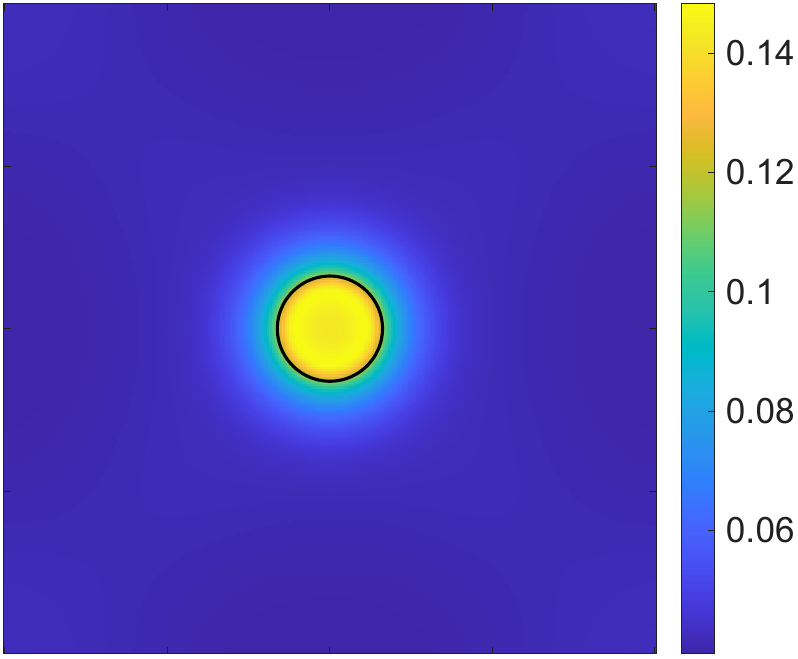}

    \vspace{1mm}
	\hspace{1mm}\includegraphics[scale=0.212]{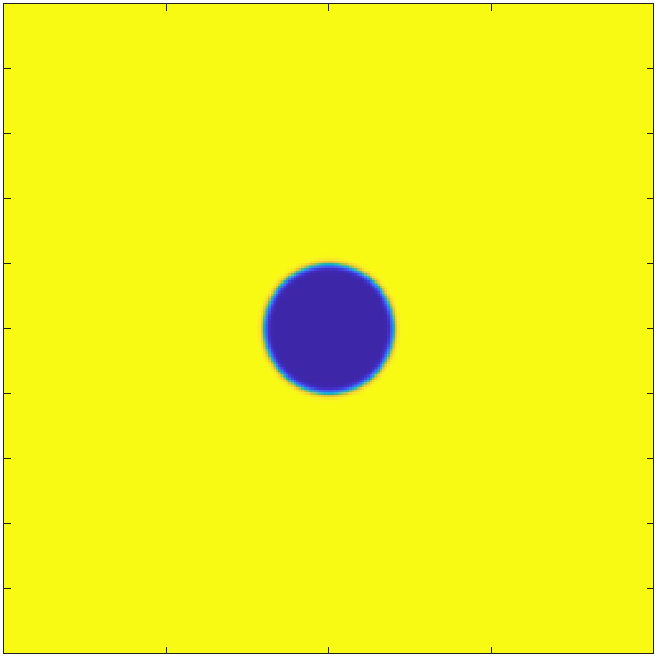}
	\includegraphics[scale=0.212]{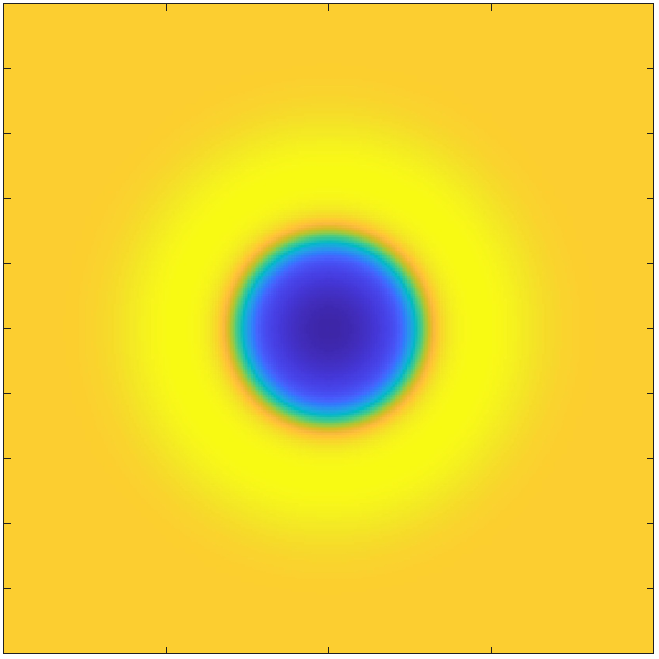}
	\includegraphics[scale=0.212]{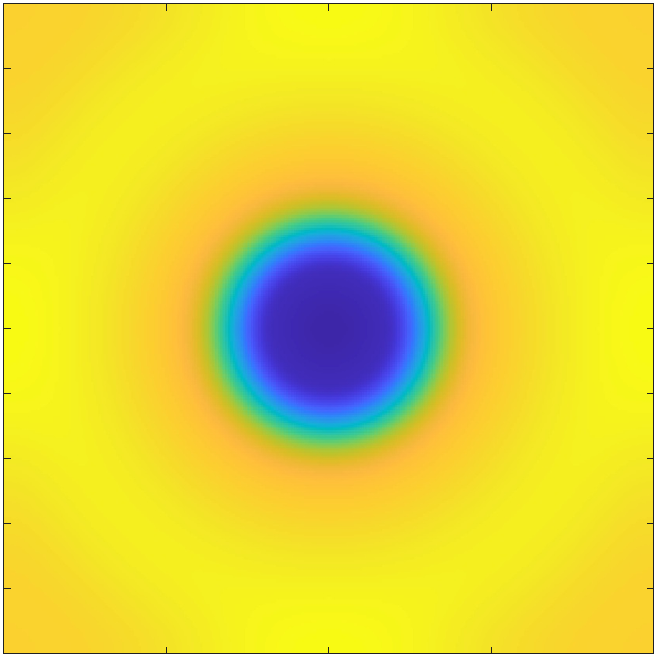}
	\includegraphics[scale=0.212]{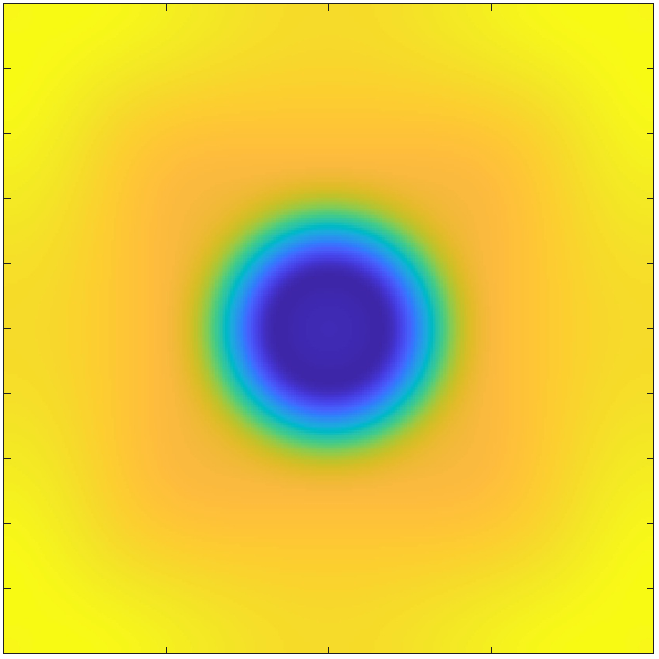}
	\includegraphics[scale=0.212]{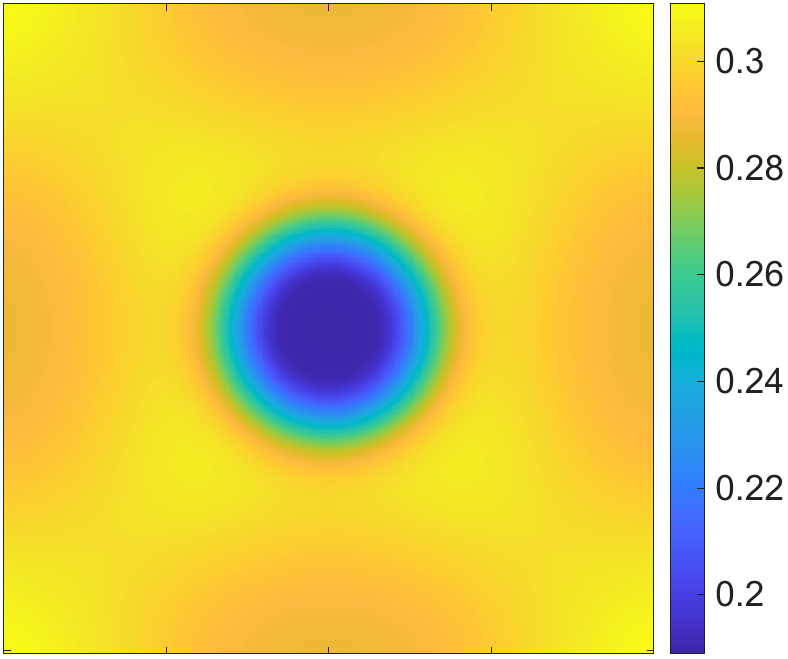}
	\caption{(i) First row: the number density in droplet evaporation. The black circle indicates the interface located by the intermediate number density $n_{mid}=(n_l+n_g)/2 \approx 0.1170$. (ii) Second row: the temperature in droplet evaporation. $t=0,50,100,150,200$.}
	\label{fig:Droplet n and T}
\end{figure}

In addition, the interface location is indicated in the first row of Figure \ref{fig:Droplet n and T} by the intermediate value $n_{mid}=(n_l+n_g)/2 \approx 0.1170$. Over the selected time interval, the numerical results exhibit an approximately linear decrease in $D^2$, consistent with the classical $D^2$-law \cite{law2006combustion,sirignano1999fluid}, which states that the square of the droplet diameter $D$ decreases linearly with time, i.e.
\begin{equation}
	\left(\frac{D}{D_0}\right)^2 = 1-\kappa t,\nonumber
\end{equation}
where $D_0$ is the initial droplet diameter and $\kappa$ is the evaporation constant. Figure \ref{fig:Diameter} provides a quantitative illustration of the squared droplet diameter over time, demonstrating excellent agreement with the $D^2$-law.
\begin{figure}[t]
	\centering
	\includegraphics[scale=0.4]{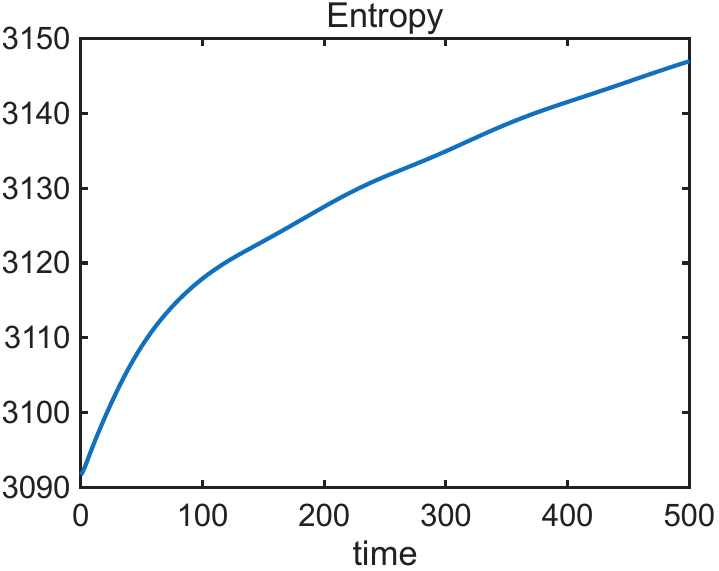}\hspace{10mm}
	\includegraphics[scale=0.4]{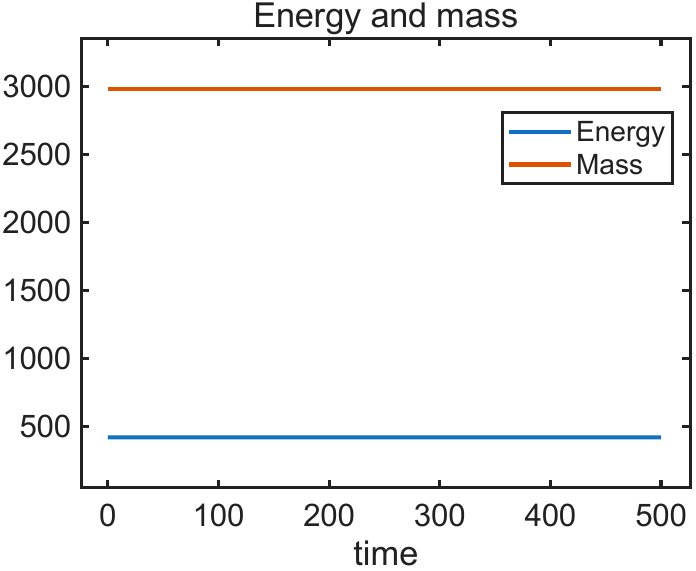}
	\caption{The variations of energy, entropy, and mass in droplet evaporation.}
	\label{fig:Droplet entropy}
\end{figure}

\begin{figure}
	\centering
	\includegraphics[scale=0.45]{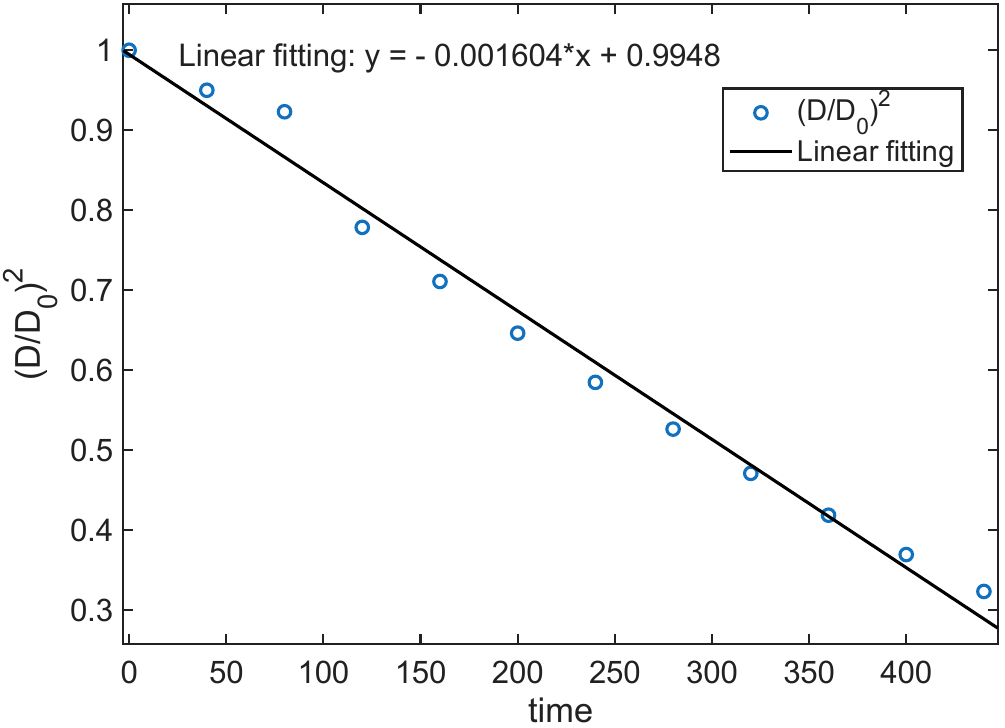}
	\caption{Time evolution of the square of the dimensionless diameter for the present model.}
	\label{fig:Diameter}
\end{figure}

\subsection{Leidenfrost effect in 2D and 3D}
In this section, we simulate the two- and three-dimensional Leidenfrost effect and compare our numerical results with those reported in the literature \cite{Xu-Qian-PRE-13}. The Leidenfrost effect requires that the boundary in contact with the droplet be maintained at a temperature sufficiently higher than the initial droplet temperature to induce rapid evaporation. Therefore, in the simulations, we impose mixed boundary conditions on the temperature: Dirichlet conditions on $y=0,L$ and Neumann conditions on $x=0,L$, while the number density and velocity satisfy homogeneous Neumann and zero boundary conditions, respectively. 
Gravity cannot be neglected in this case. However, the physical quantity in the real world is negligible at the small scales employed in numerical simulations. To make the phenomena more pronounced, the gravitational parameter is artificially amplified and set to $G=10^{-4}$, which is identical with that in \cite{Xu-Qian-PRE-13,Xu-Qian-PRE-14}.
In addition, the coefficient $R$ is chosen as $0.02$, and the computational domain is $\Omega = (0,L)^2$ with $L=200$. The time step is $\dt=10^{-2}$ and the spatial step is $h=L/512$. For the initial data, the temperature at the bottom boundary is higher than that at the top boundary, and the temperature within the region varies linearly:
\begin{align}
	& T^0|_{y=0}=T_h:=0.975T_c, \quad T^0|_{y=L}=T_l:=0.875T_c, \nonumber\\
	& T^0(x,y) = T_h + \frac{T_l-T_h}{L}y.\nonumber
\end{align}
A semi-elliptical droplet adheres to the bottom boundary, with number density $n_l=0.58n_c$ for the liquid and $n_g=0.122n_c$ for the surrounding gas; see Figure \ref{fig:Leidenfrost n 2}.

The temporal evolution of the number density and temperature is presented in Figure \ref{fig:Leidenfrost n 2} and Figure \ref{fig:Leidenfrost T 2}, respectively. It is seen that the droplet is quickly vaporized by the high temperature. The resulting vapor elevates the droplet away from the hot boundary, leading to an indentation on the droplet surface. At the same time, the droplet gradually evaporates and eventually merges with the background gas. These phenomenons are in good agreement with the observations in \cite{Xu-Qian-PRE-13}. Since the droplet is initially at rest on the heated boundary with no initial momentum, it does not exhibit the pronounced vertical bouncing observed in \cite{Qiao_Xu_Yang_Zhang_2026}.
In addition, it can be observed that during levitation, the levitating droplet experiences slight changes in shape and height. These changes, together with the heat exchange (determined by the Dirichlet boundary condition for temperature) and gravity, lead to a fluctuating increase in energy and entropy, which is shown in Figure \ref{fig:Leidenfrost entropy 2}. The mass conservation is represented in Figure \ref{fig:Leidenfrost entropy 2} as well.
\begin{figure}[t]
	\centering
	\includegraphics[scale=0.107]{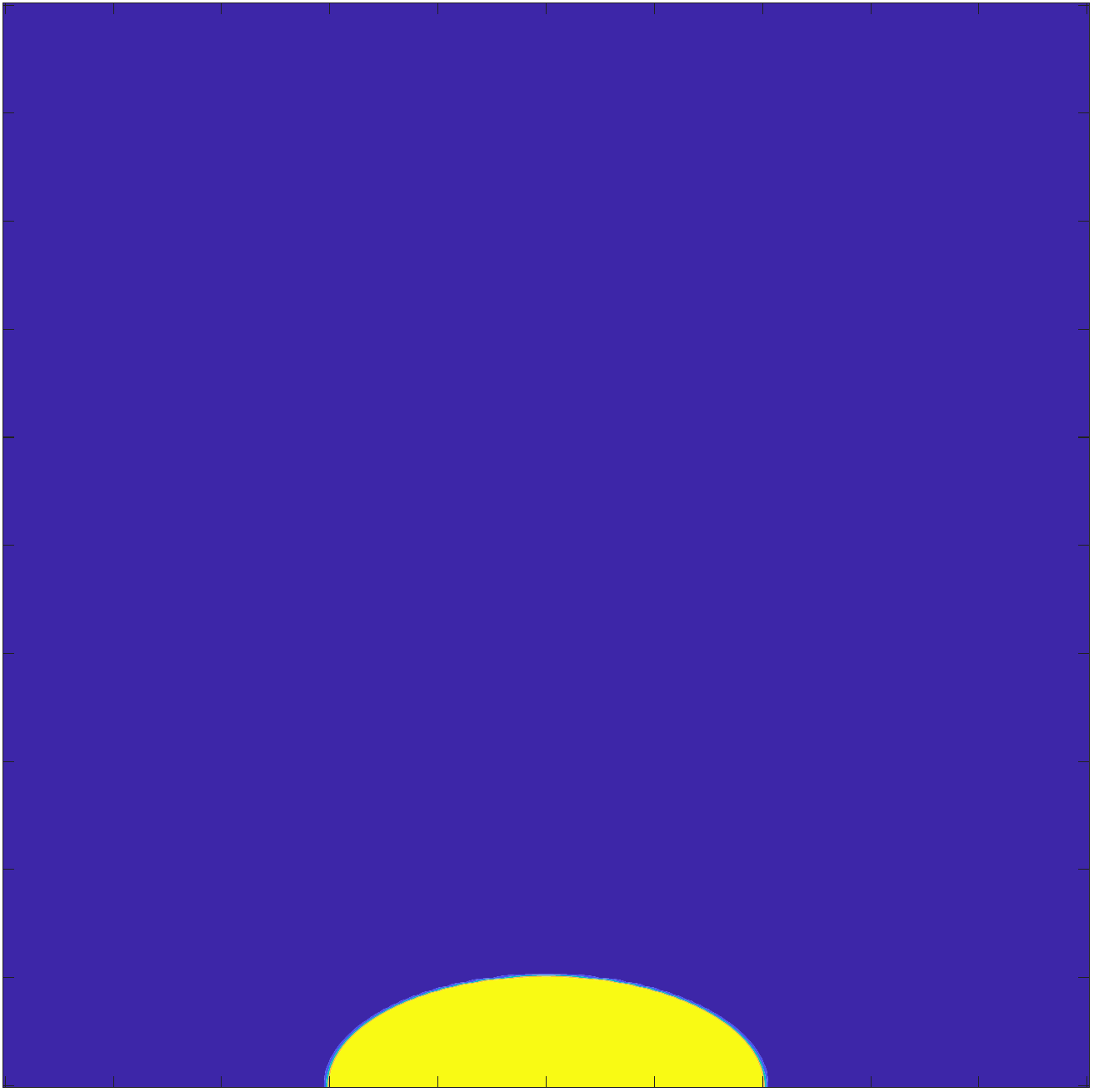}
	\includegraphics[scale=0.107]{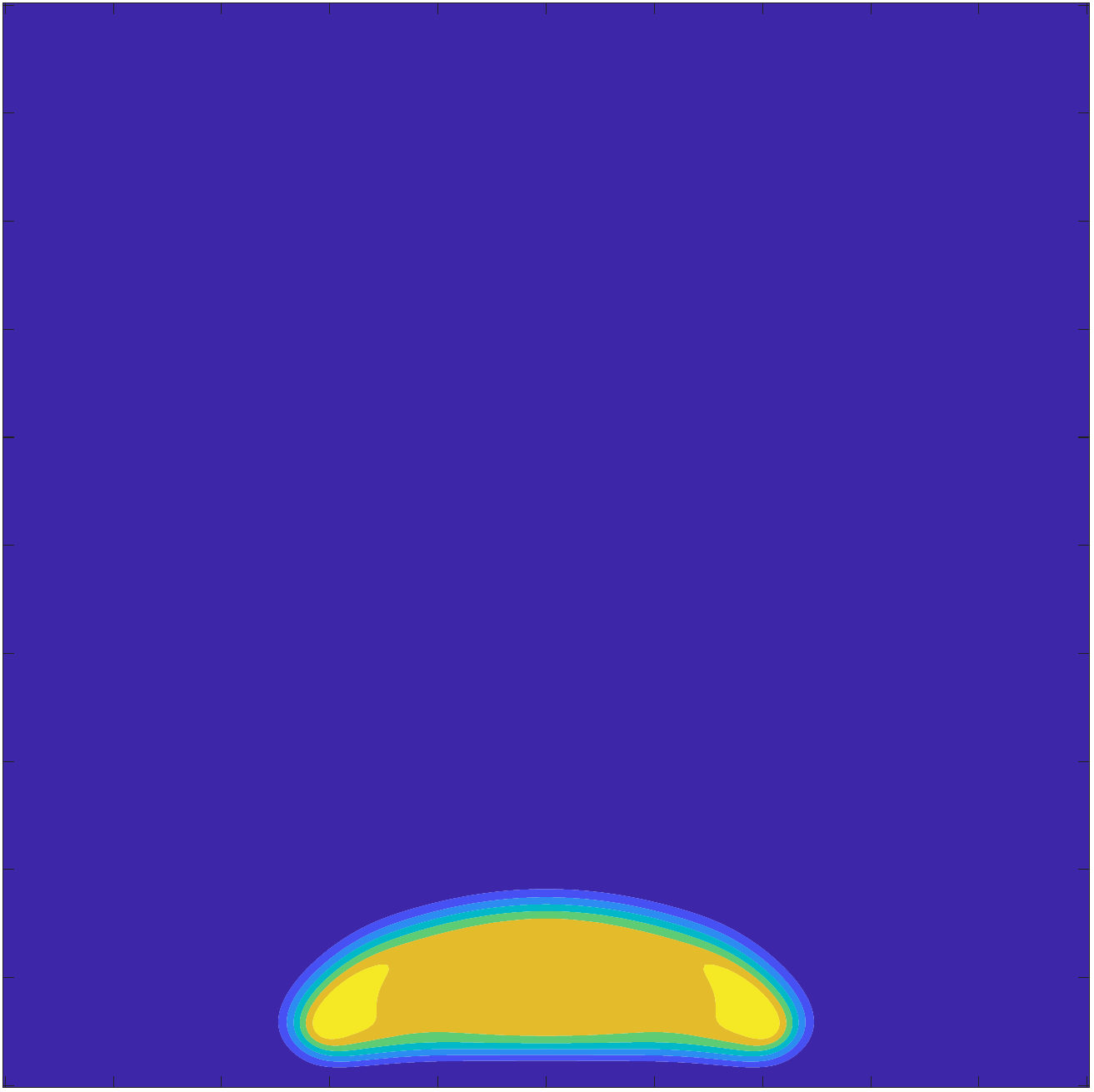}
	\includegraphics[scale=0.107]{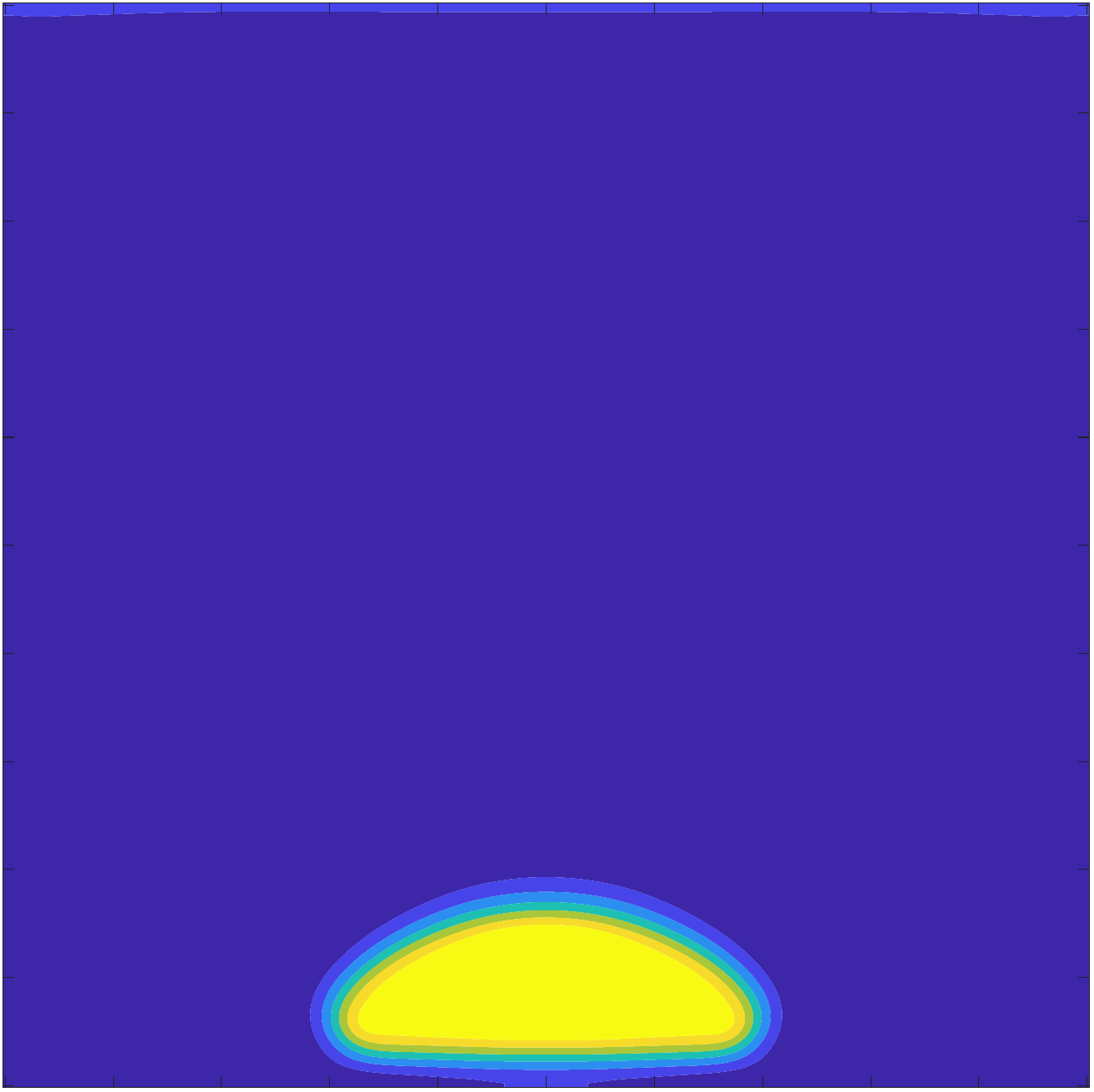}
	\includegraphics[scale=0.107]{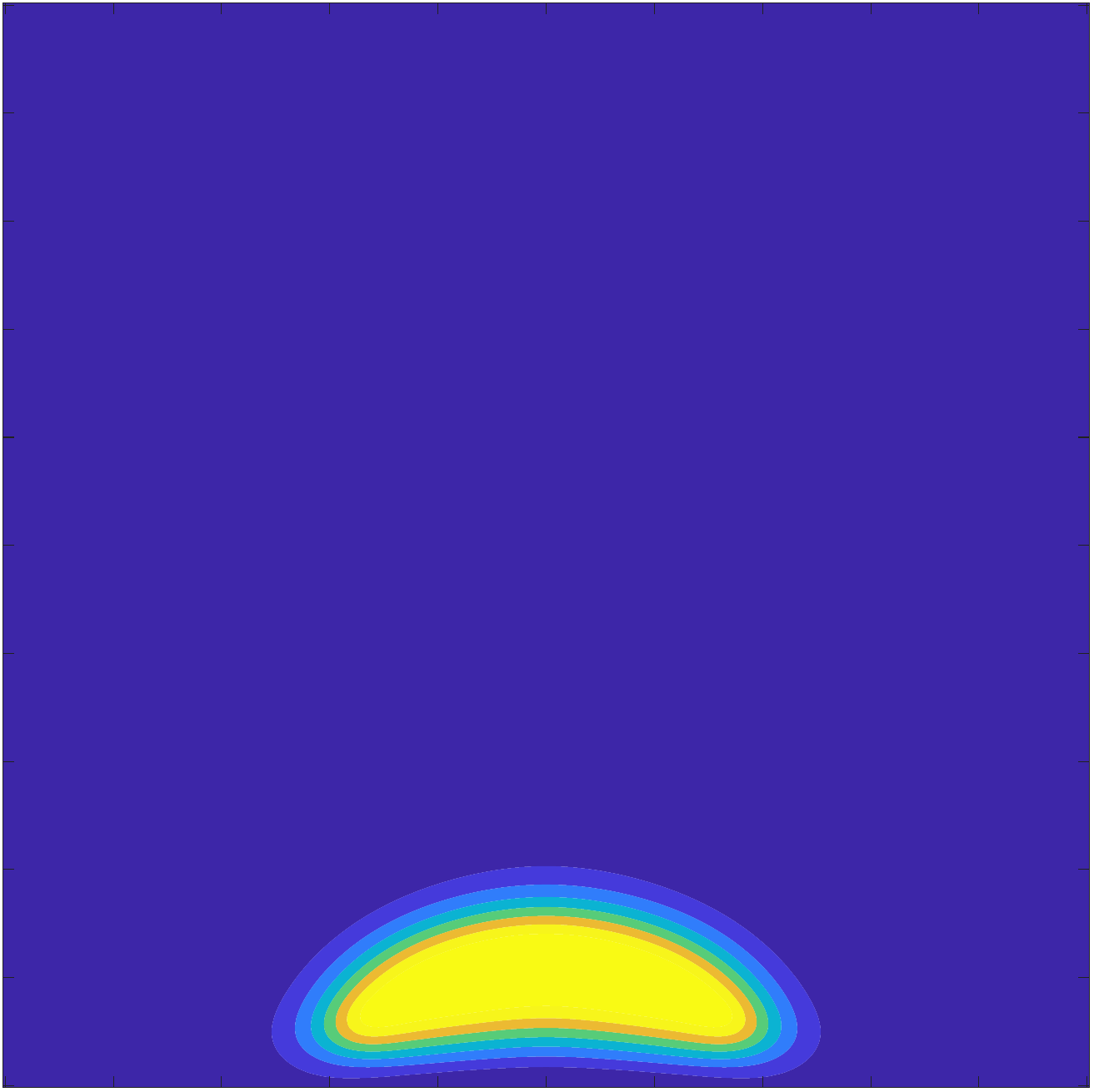}
	\includegraphics[scale=0.21]{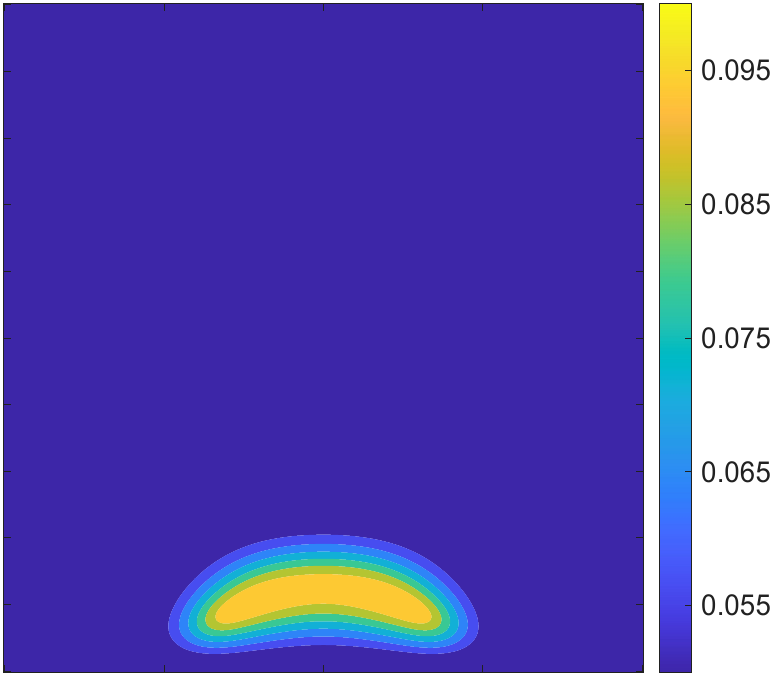}
	
	\vspace{1mm}
	\hspace{1mm}\includegraphics[scale=0.107]{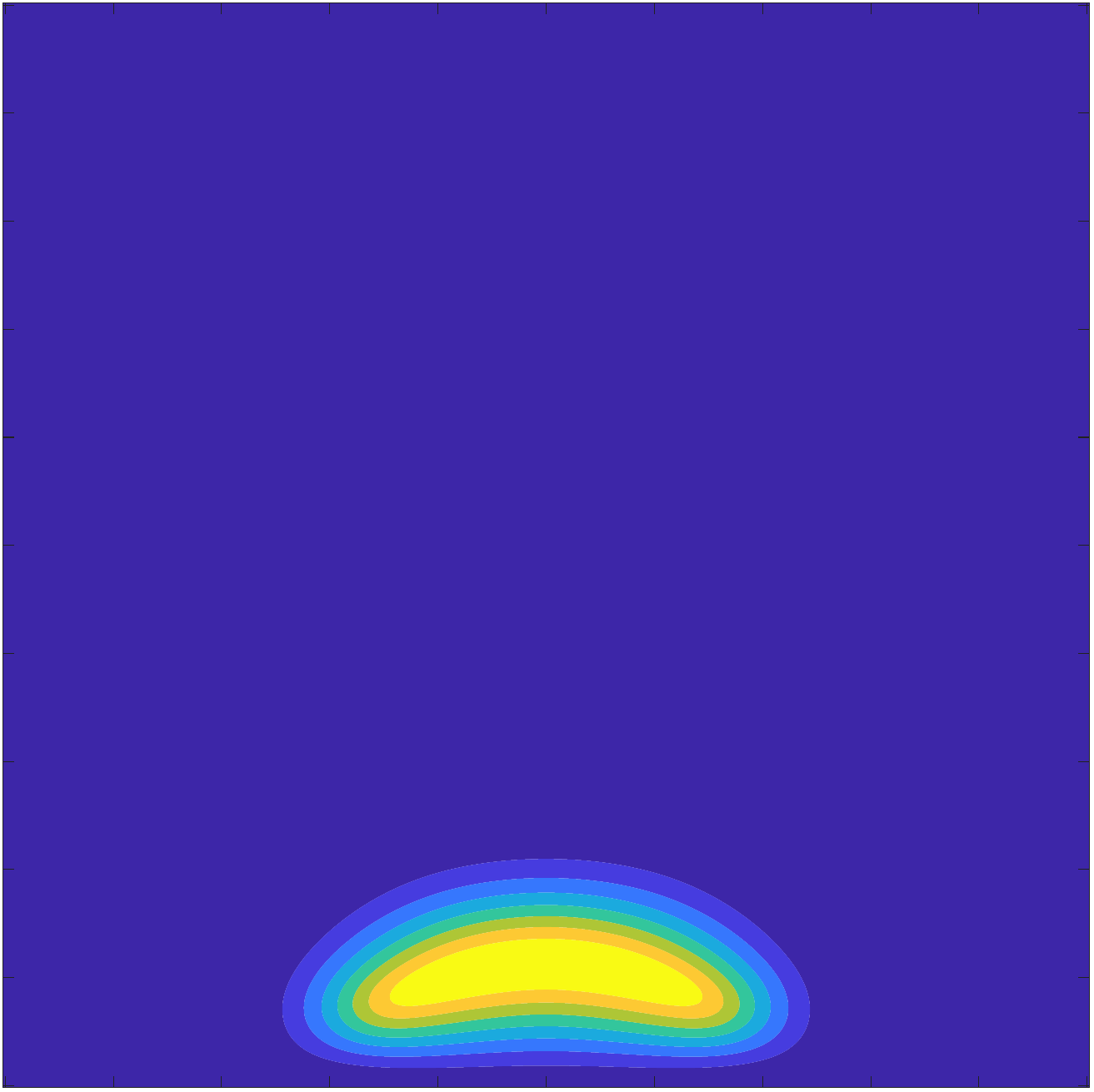}
	\includegraphics[scale=0.107]{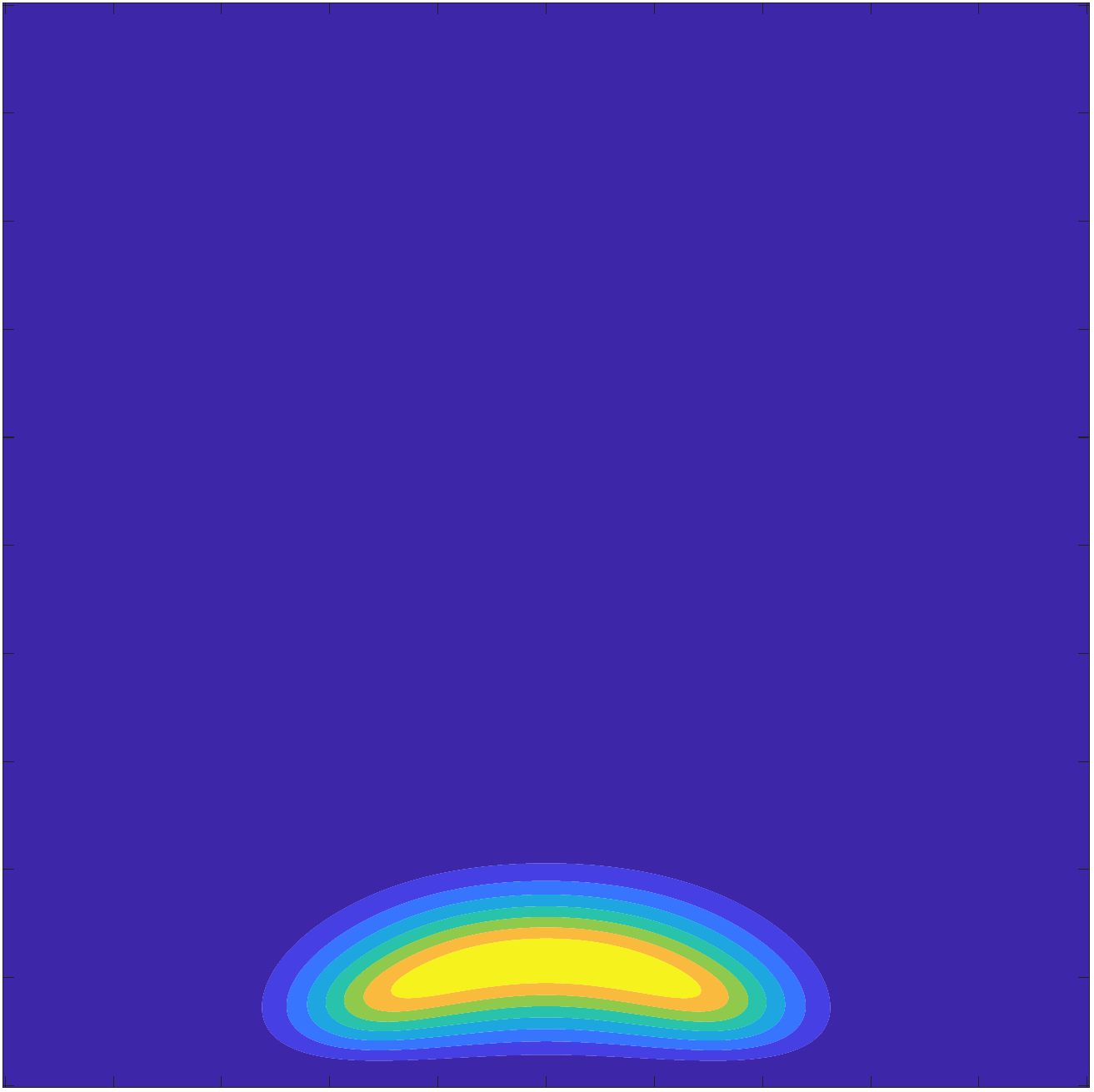}
	\includegraphics[scale=0.107]{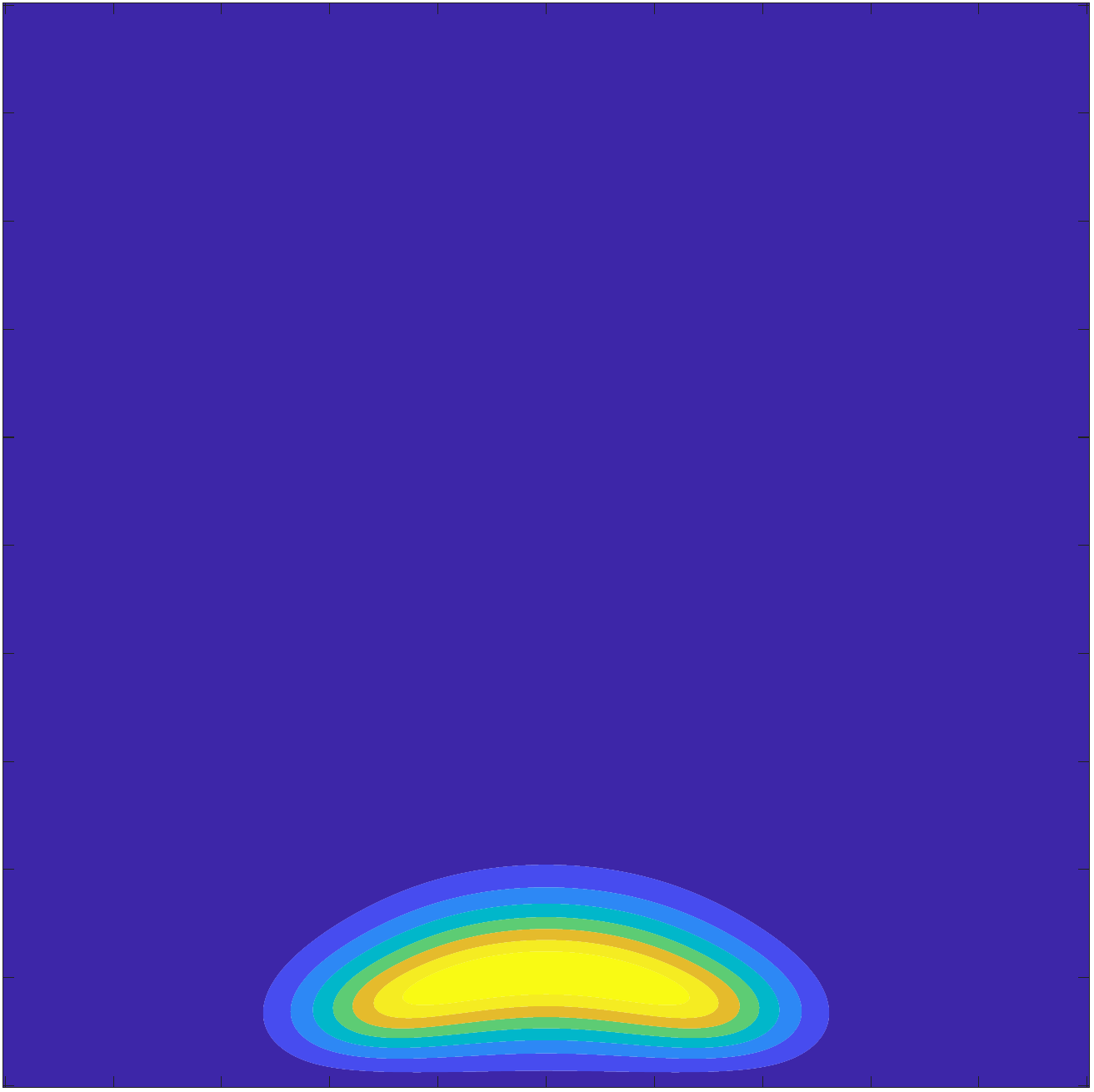}
	\includegraphics[scale=0.107]{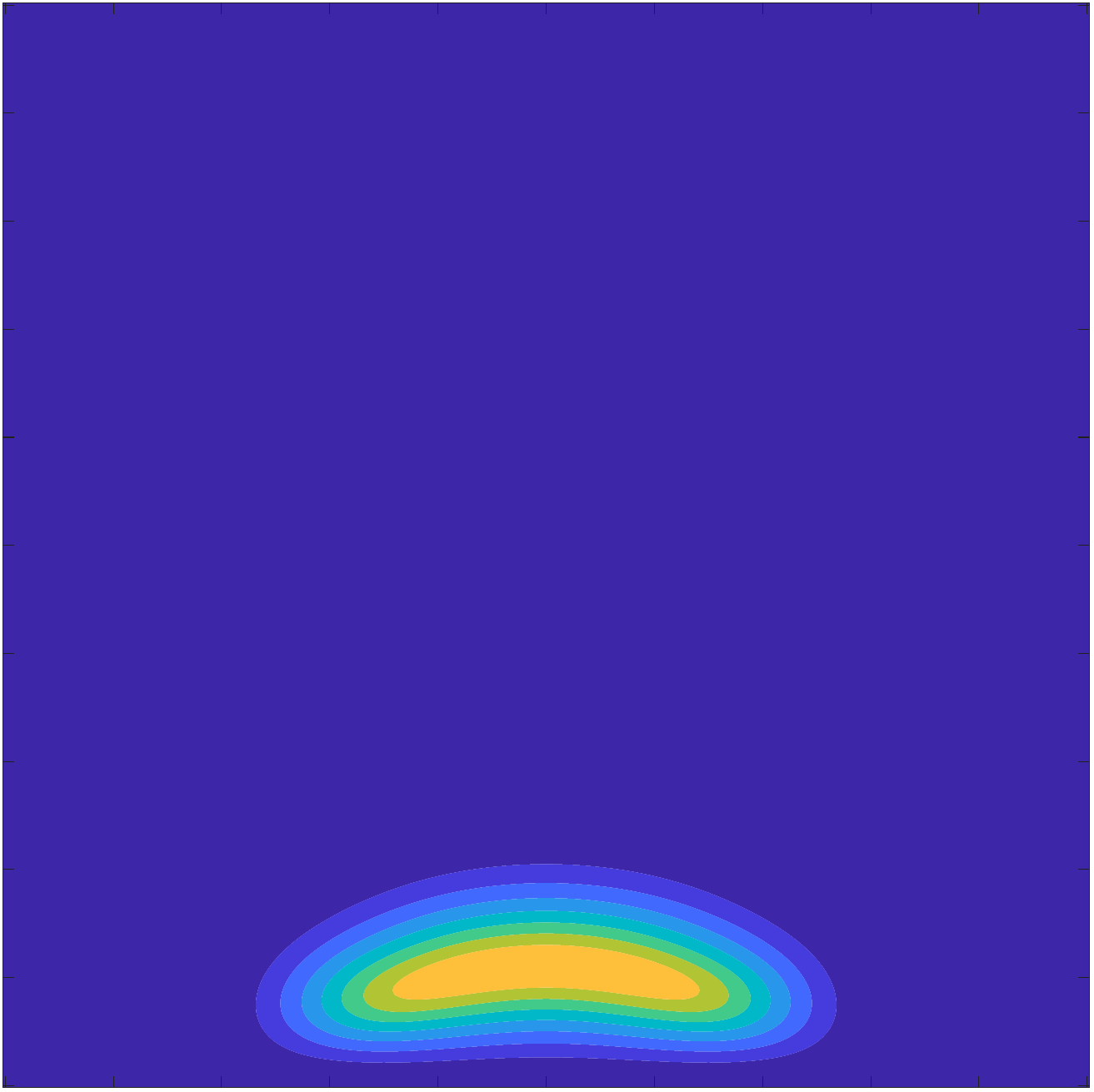}
	\includegraphics[scale=0.21]{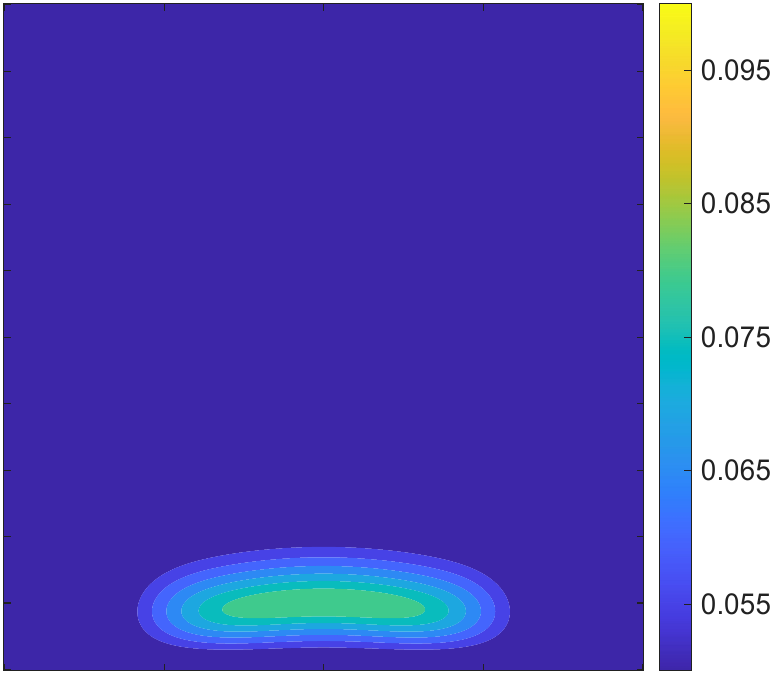}
	\caption{The evolution of the number density in the 2D Leidenfrost effect with semi-elliptical initial density. $t=0,20,40,80,120,140,160,180,200,300$.}
	\label{fig:Leidenfrost n 2}
\end{figure}
\begin{figure}[t]
	\centering
	\includegraphics[scale=0.107]{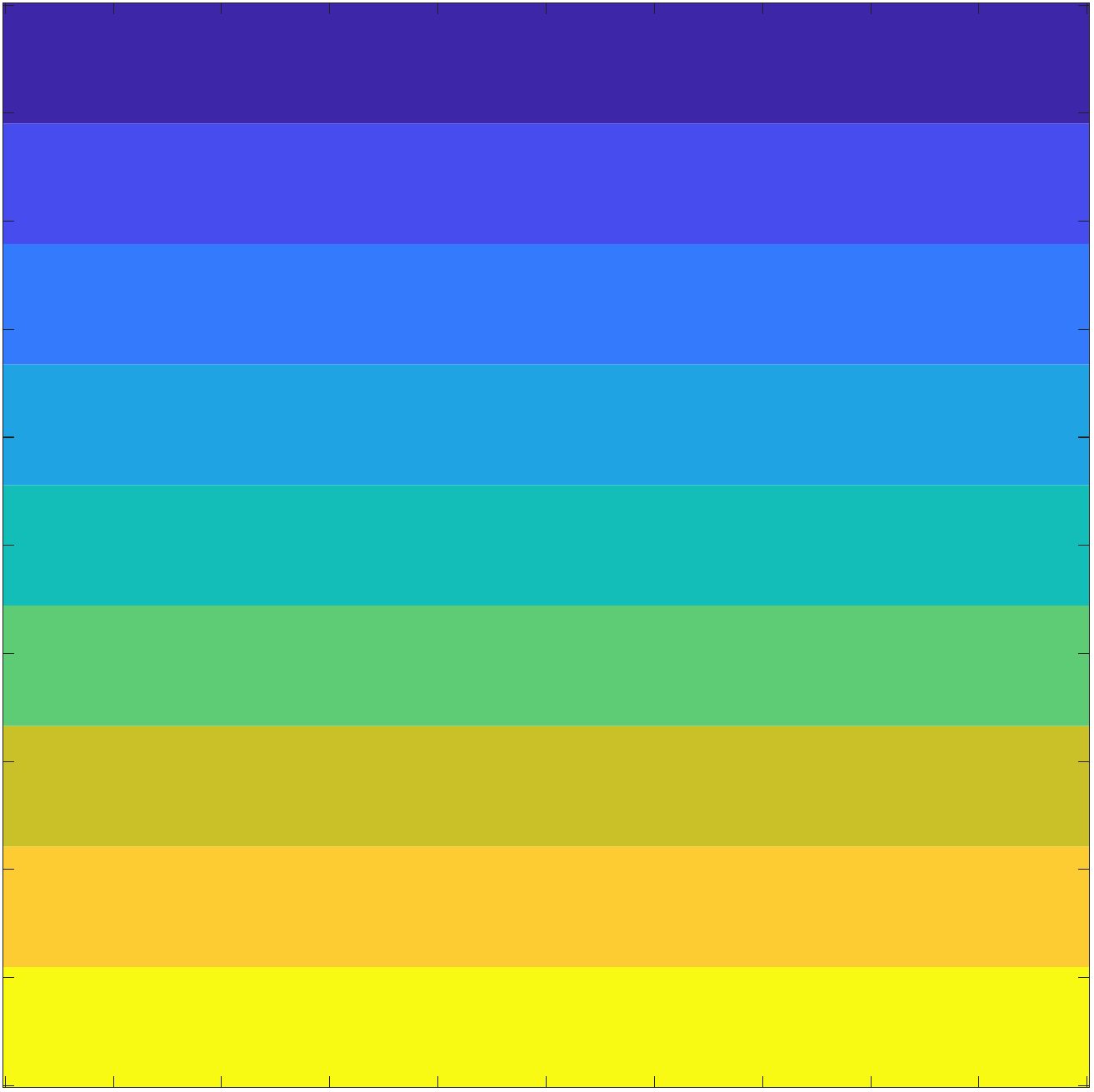}
	\includegraphics[scale=0.107]{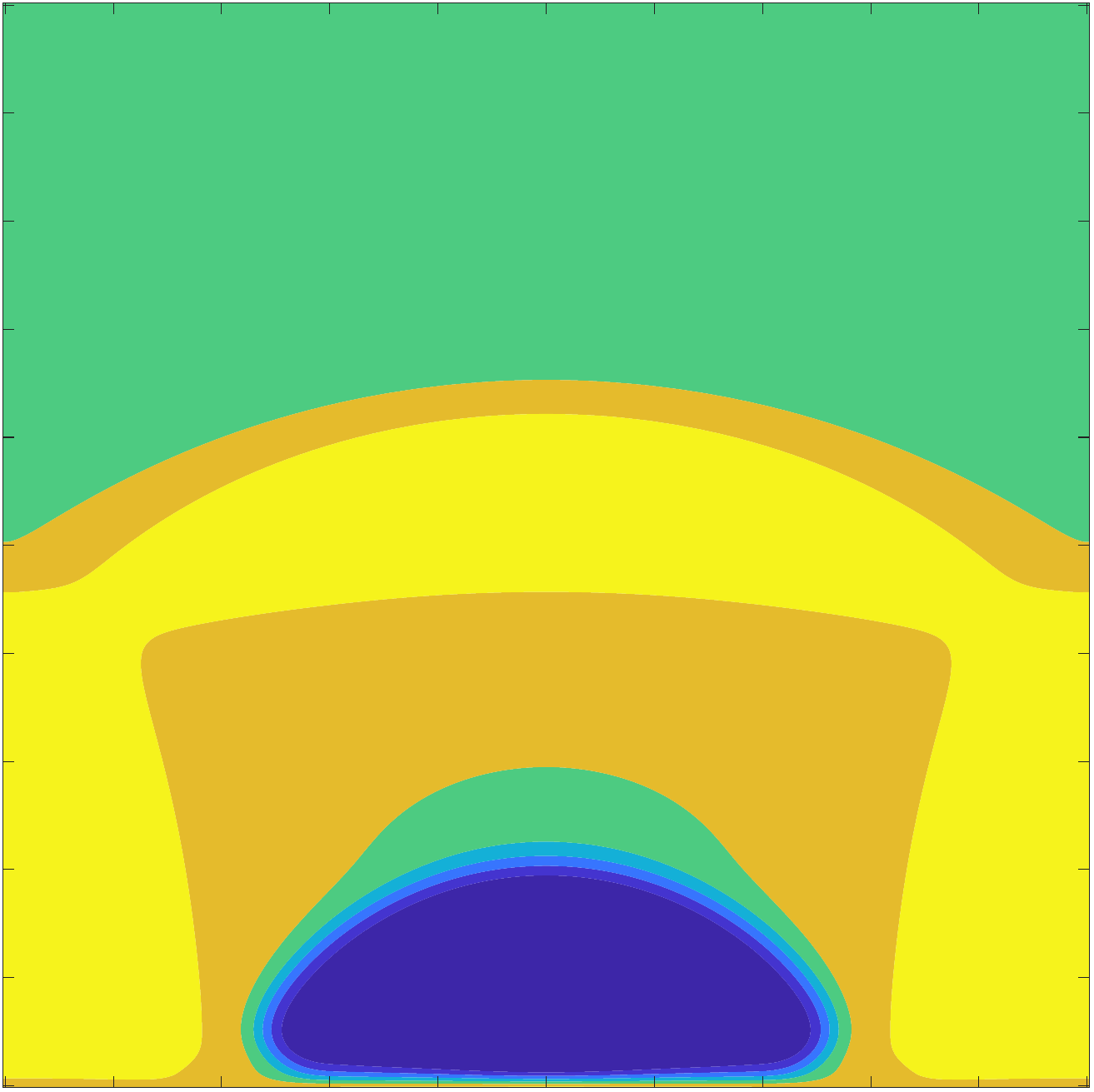}
	\includegraphics[scale=0.107]{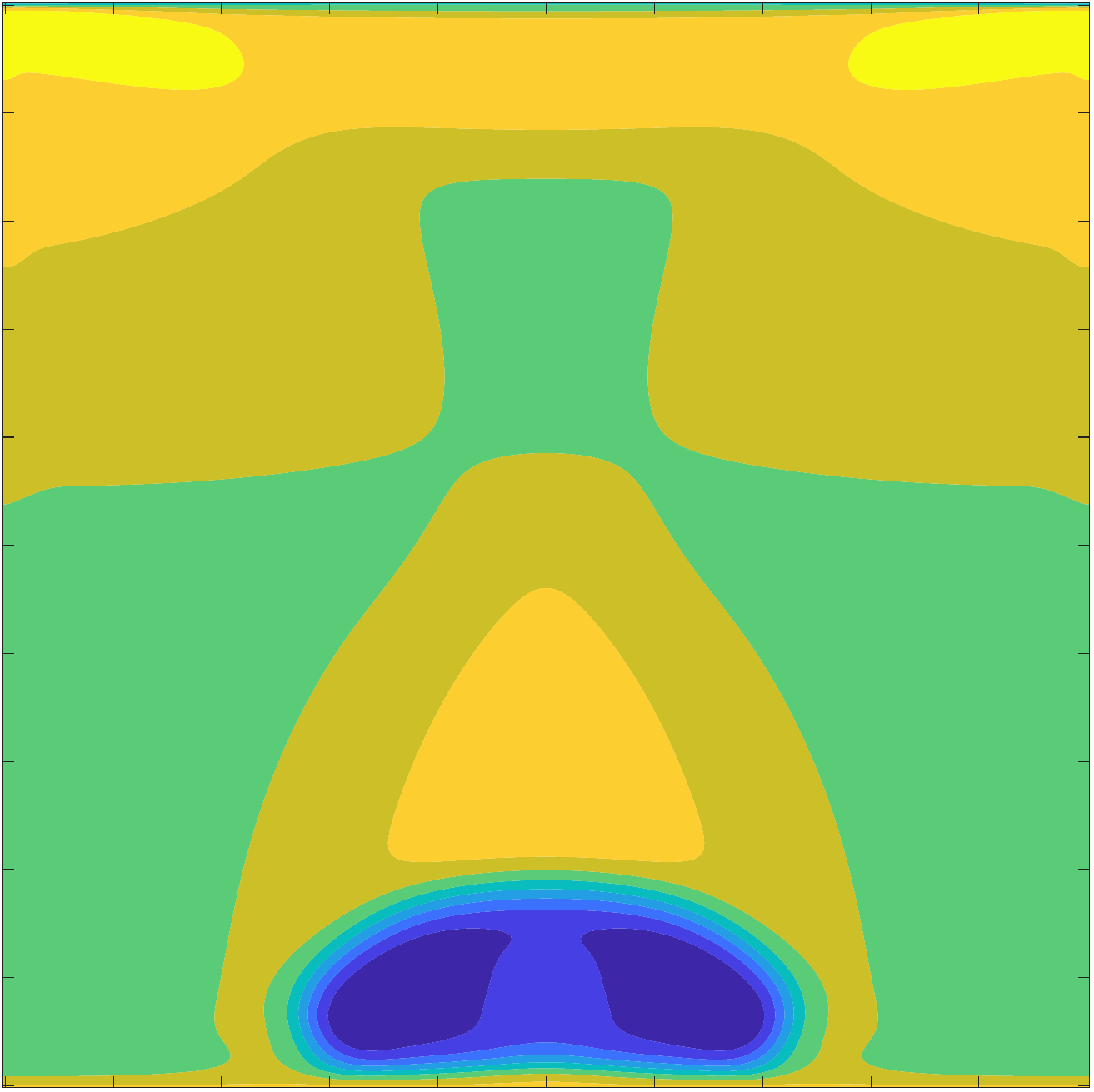}
	\includegraphics[scale=0.107]{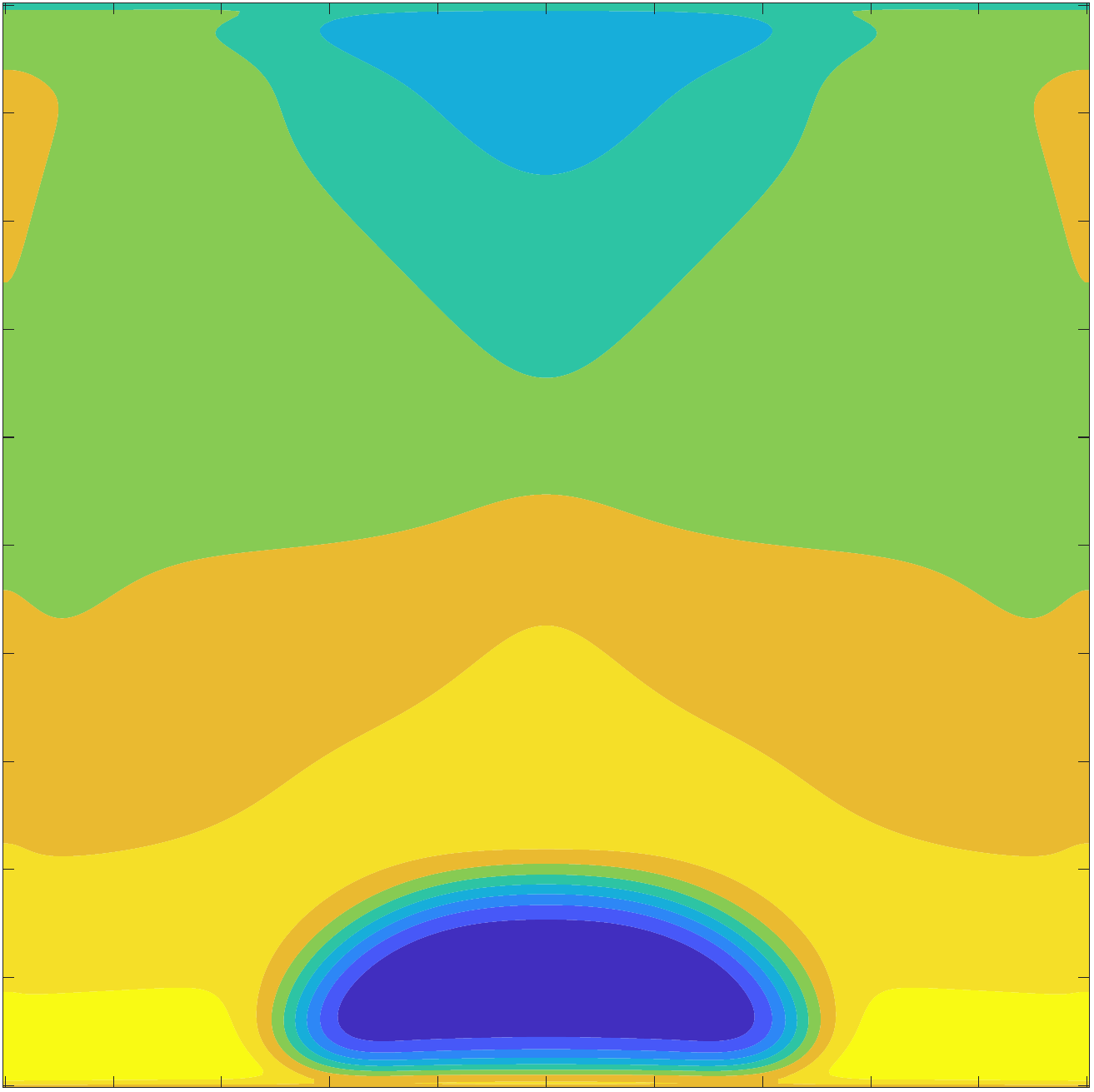}
	\includegraphics[scale=0.22]{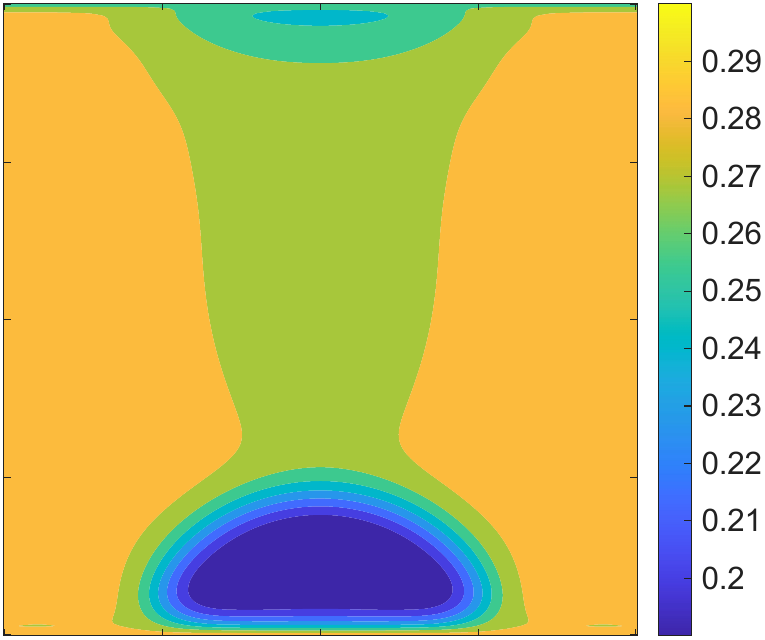}

    \vspace{1mm}
	\hspace{1mm}\includegraphics[scale=0.107]{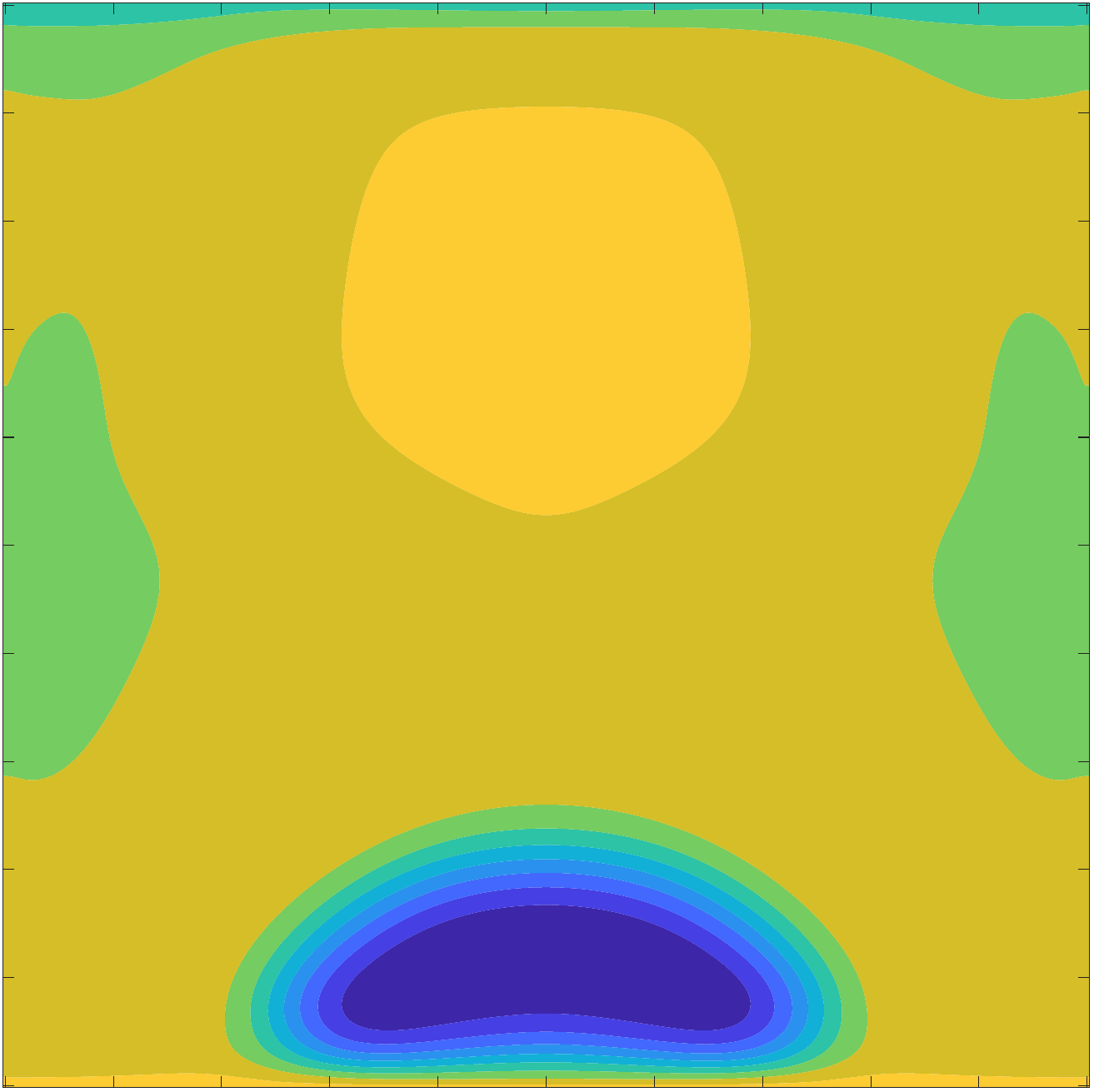}
	\includegraphics[scale=0.107]{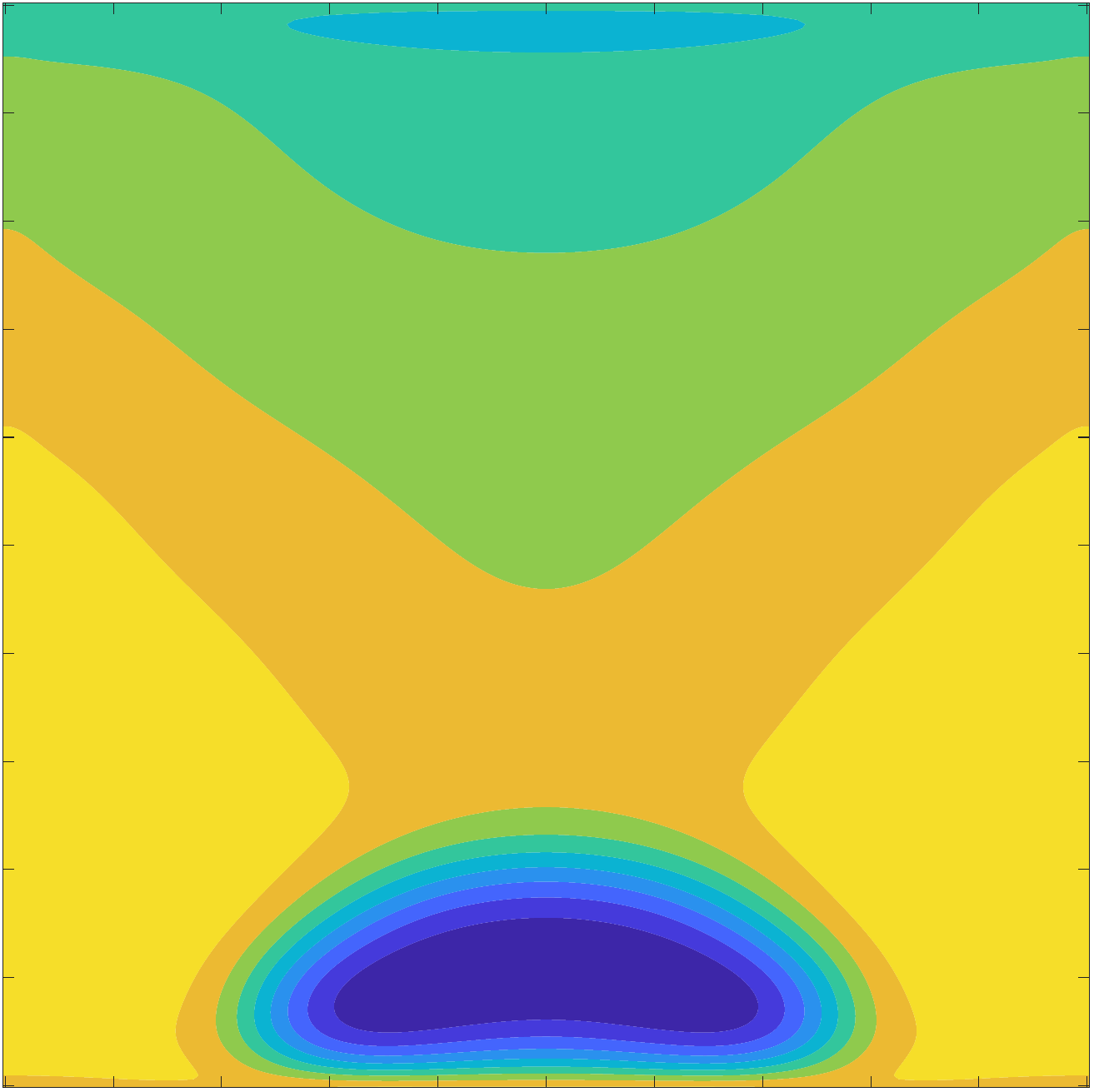}
	\includegraphics[scale=0.107]{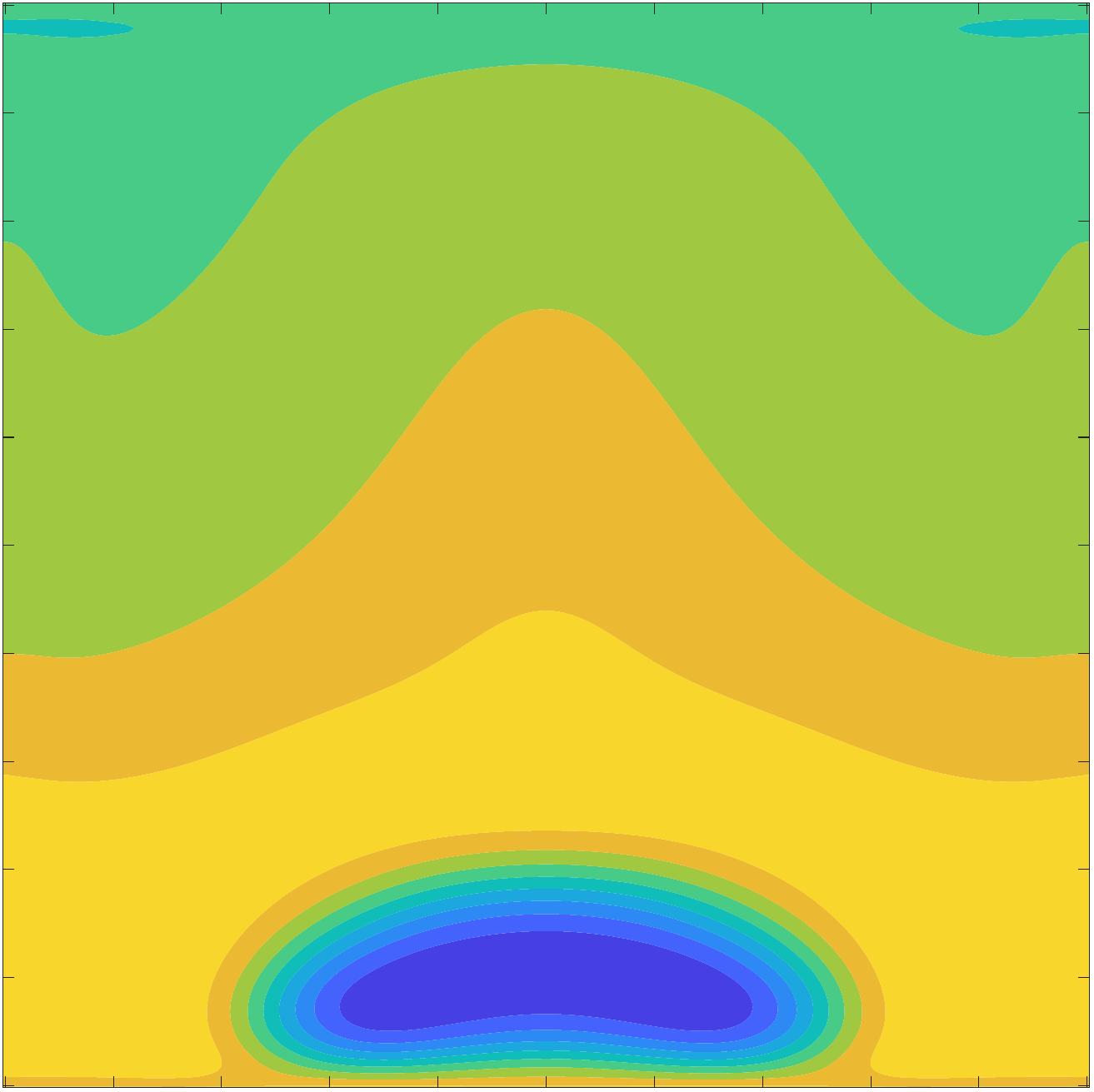}
	\includegraphics[scale=0.107]{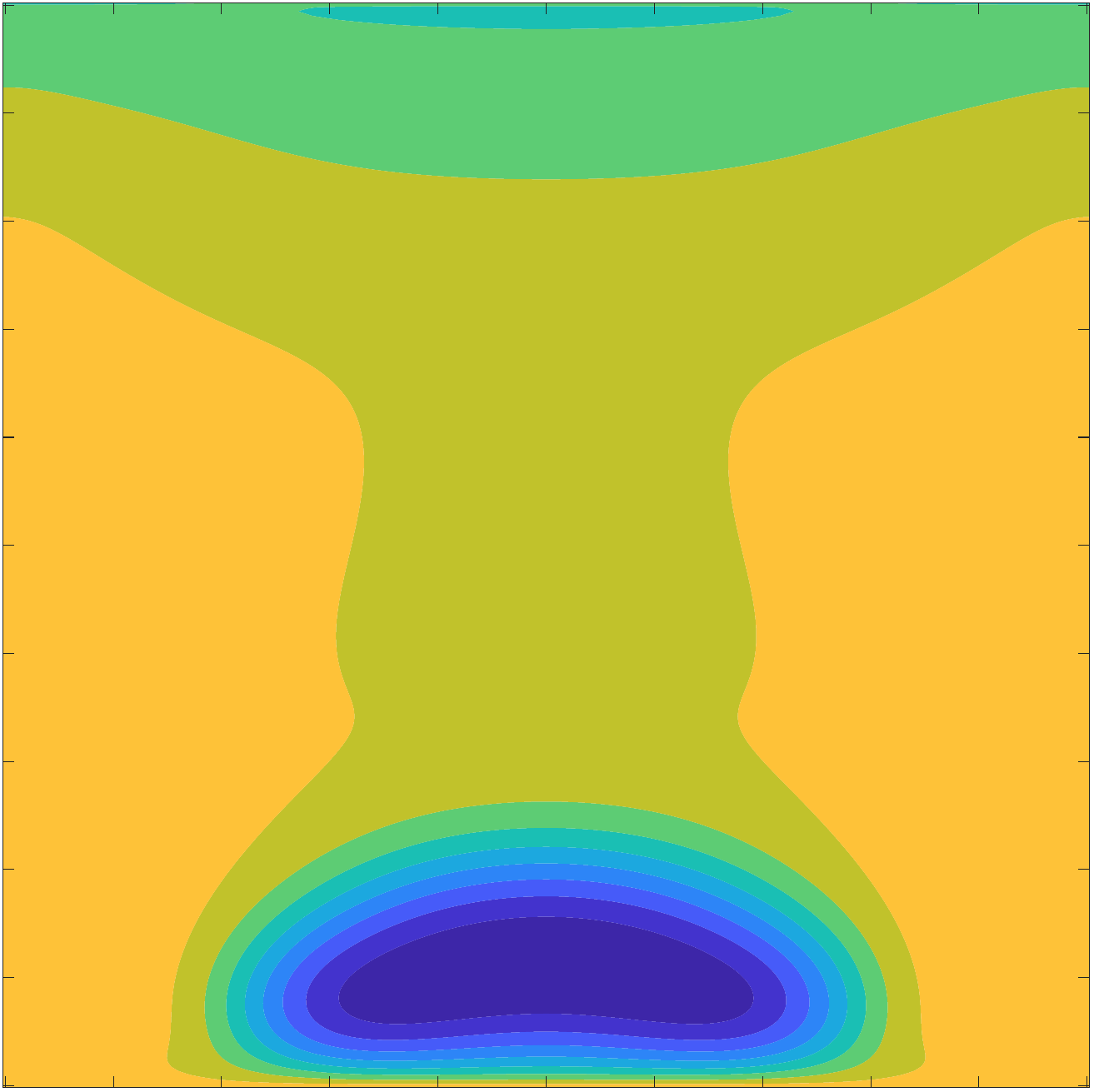}
	\includegraphics[scale=0.22]{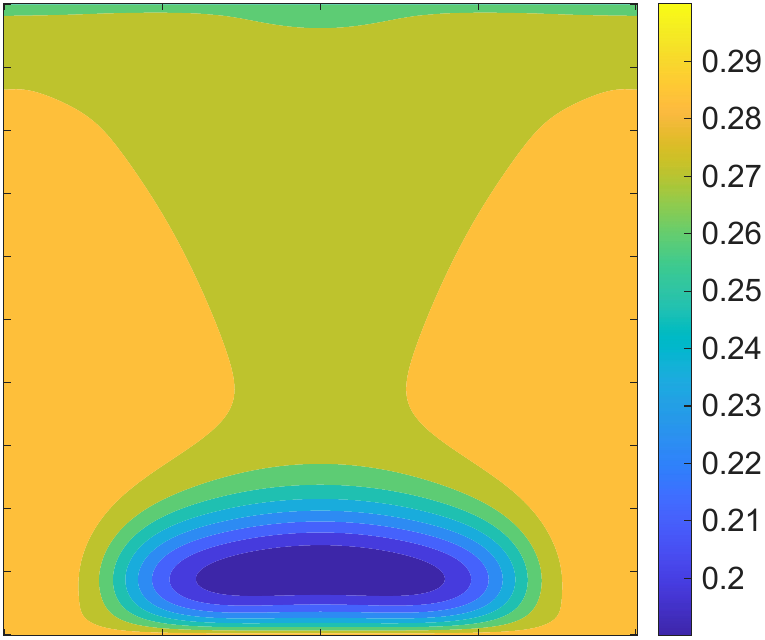}
	\caption{The evolution of the temperature in the 2D Leidenfrost effect with semi-elliptical initial density. $t=0,20,40,80,120,140,160,180,200,300$.}
	\label{fig:Leidenfrost T 2}
\end{figure}
\begin{figure}[t]
	\centering
	\includegraphics[scale=0.35]{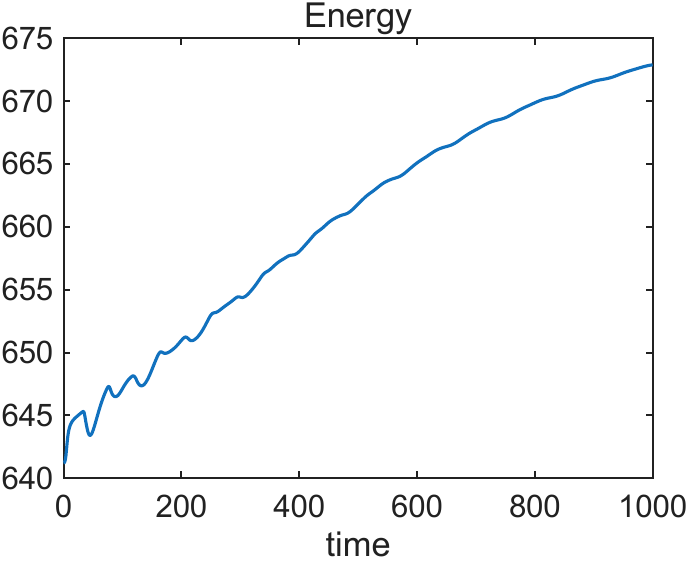}
	\includegraphics[scale=0.35]{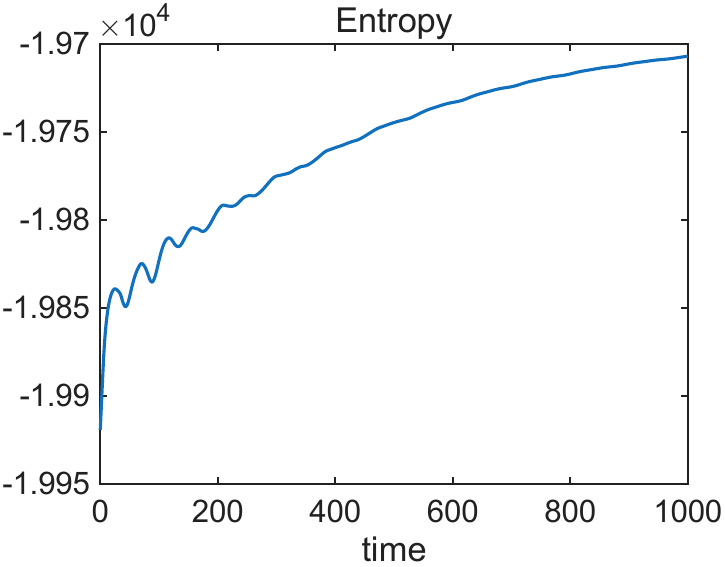}
	\includegraphics[scale=0.35]{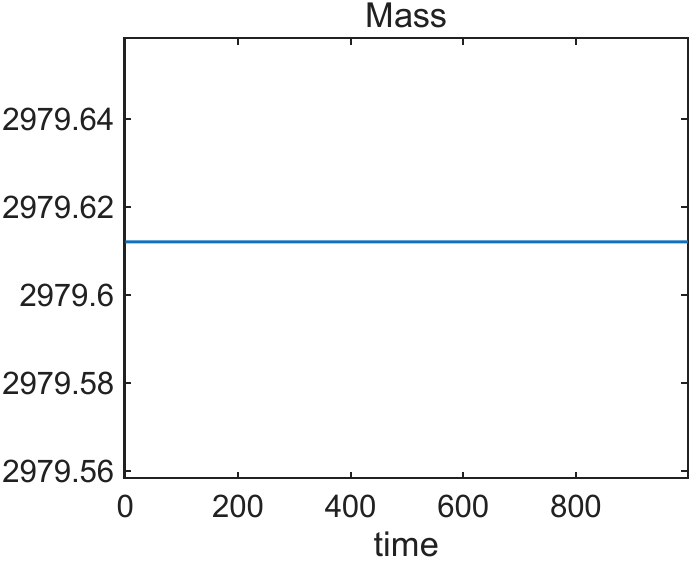}
	\caption{The variations of energy, entropy, and mass in the 2D Leidenfrost effect with semi-elliptical initial density.}
	\label{fig:Leidenfrost entropy 2}
\end{figure}

Owing to the high computational efficiency, the linear scheme allows us to perform three-dimensional simulations. The initial condition is a semi-ellipsoidal droplet in contact with a boundary kept at a high temperature, with the same liquid and vapor densities and an interface with a certain thickness. The temperature has higher Dirichlet boundary condition at $z=0$ and lower Dirichlet boundary condition at $z=L$ and varies linearly within the region. Taking computational complexity into account, the spatial step is reduced to $h=L/128$, keeping all other parameters identical to those in the two‑dimensional case.

Figure \ref{fig:Leidenfrost 3D} presents the three-dimensional evolution of the Leidenfrost droplet at selected times. The isosurface corresponding to the intermediate density $n_{mid}=0.1170$ is used to identify the liquid-vapor interface. For clarity, only half of the isosurface is displayed, together with density contours on the central cross-section. Similar to the two‑dimensional case, the droplet is lifted off the hot wall by the vapor, develops an upward indentation, and gradually evaporates.
Figure \ref{fig:Leidenfrost entropy 3D} shows the evolution curves of entropy and energy, and verifies the mass conservation property. The entropy and energy display an oscillatory increasing behavior comparable to the two‑dimensional case, yet the overall evolution is smoother. 

\begin{figure}[t]
	\centering
	\includegraphics[width=\linewidth]{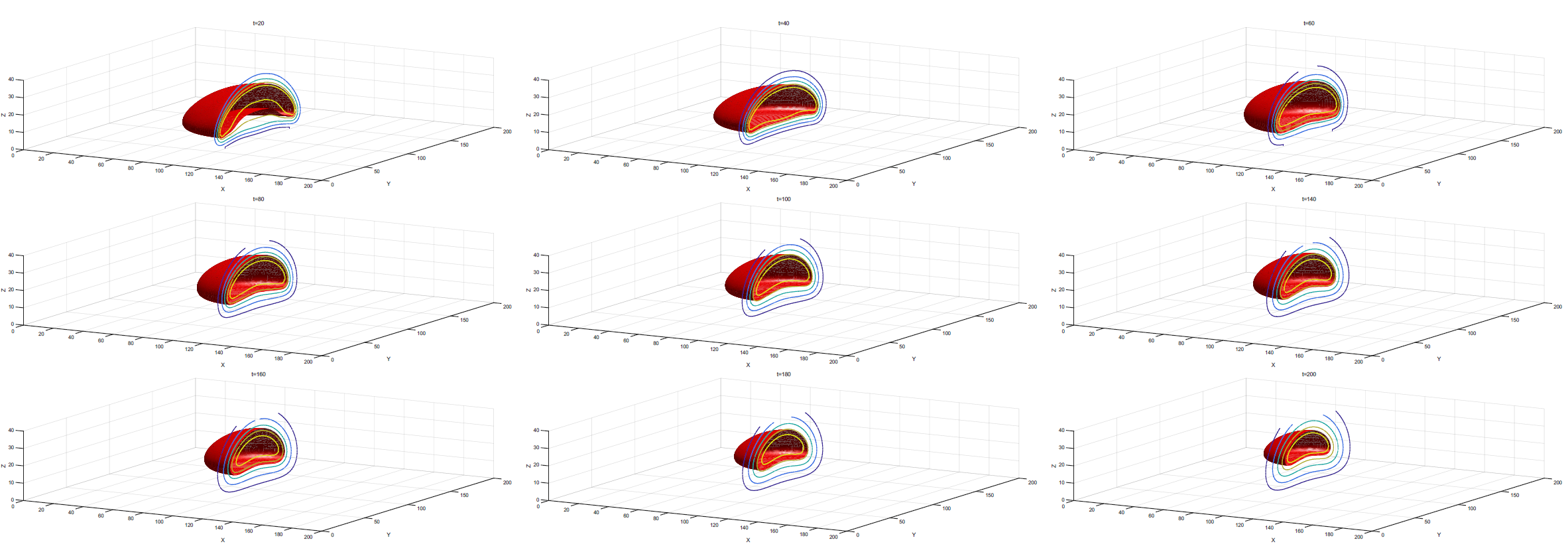}
	\caption{The evolution of the 3D Leidenfrost effect. The half isosurface corresponding to the intermediate density $n_{mid}=0.1170$ is used to identify the liquid-vapor interface, together with density contours on the central cross-section. $t=20,40,60,80,100,140,160,180,200$.}
	\label{fig:Leidenfrost 3D}
\end{figure}
\begin{figure}[t]
	\centering
	\includegraphics[scale=0.29]{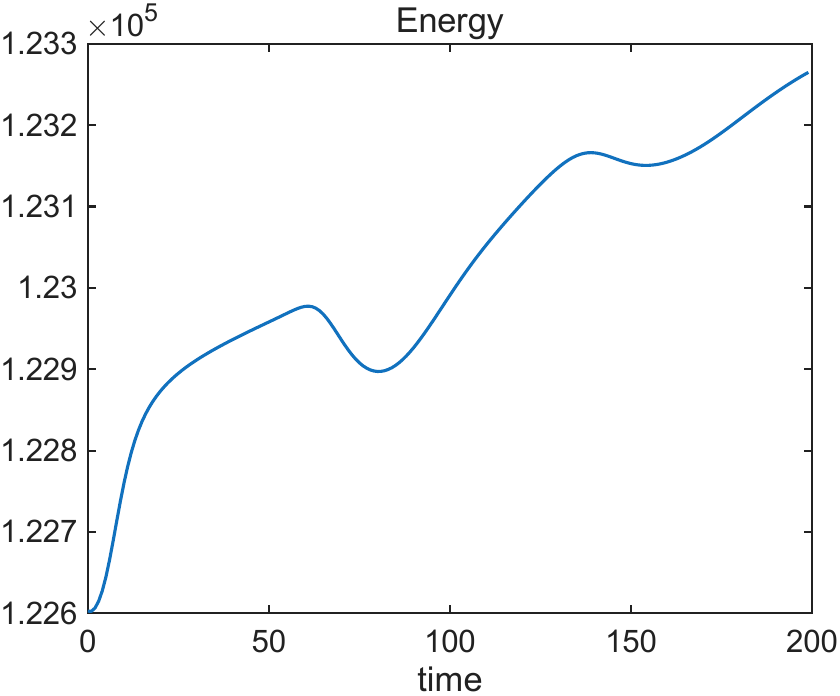}
	\includegraphics[scale=0.29]{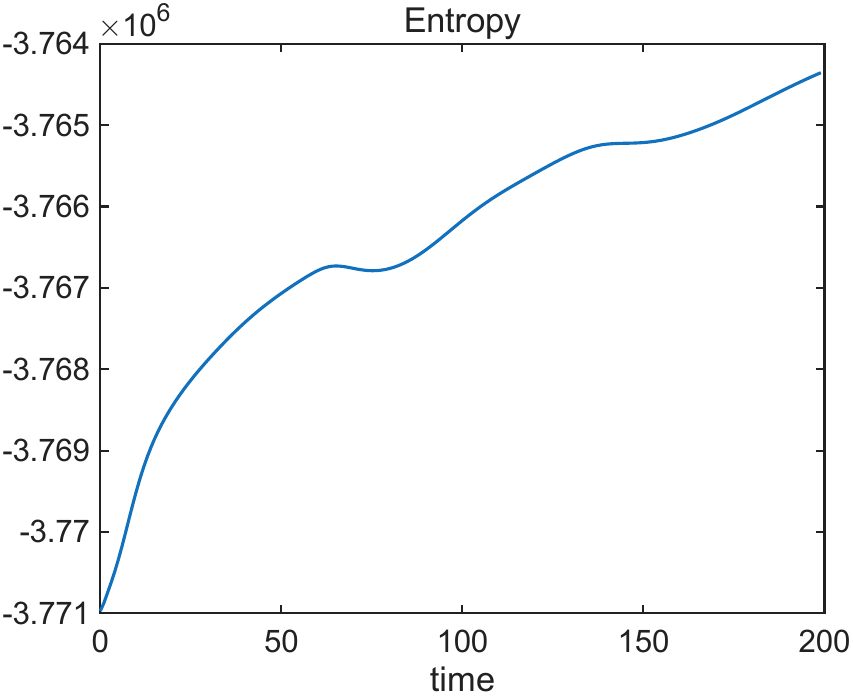}
	\includegraphics[scale=0.29]{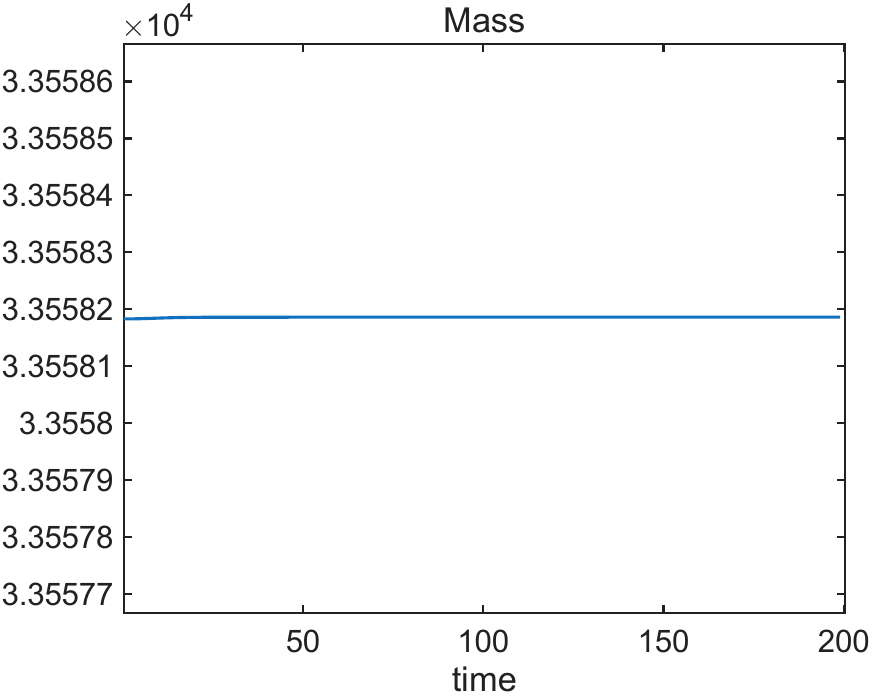}
	\caption{The variations of energy, entropy, and mass in the 3D Leidenfrost effect with semi-elliptical initial density.}
	\label{fig:Leidenfrost entropy 3D}
\end{figure}

\section{Conclusion} \label{sec: conclusion}
In this work, we have developed a linear, fully discrete, and decoupled numerical scheme for the dynamic van der Waals model of non-isothermal single-component liquid-vapor flows. 
With a suitable non-dimensionalization introduced, the original system is recast into an equivalent form. The reversible stress tensor is reformulated through the divergence identities to avoid relying on continuous chain rule at the discrete level.
Based on this reformulation, a component-wise convex-splitting strategy is applied to define the discrete chemical potential and temperature. The explicit treatment of the velocity in the mass and energy equations leads to a fully decoupled solution procedure. As a result, the density, the internal energy-temperature pair, and the velocity can be solved successively through linear subproblems at each time step. 
In particular, the decoupled density update can be viewed as an explicit treatment of the logarithmic term. Once the density is known, the temperature also depends linearly on the internal energy. 
A major feature of the proposed method is that it preserves the entropy-production law at the fully discrete level, which is one of the fundamental thermodynamic structures of the model. Numerical experiments verify the expected first-order temporal accuracy and second-order spatial accuracy and further demonstrate that the proposed method can effectively capture droplet evaporation, two- and three-dimensional Leidenfrost effects. These results indicate that the scheme is robust, efficient, and physically consistent.
Future work will mainly focus on two directions. One is the generalization of the numerical scheme to complex models, such as thermodynamically consistent boundary conditions and multi-component flows. The other is the development of fully discrete schemes that simultaneously maintain entropy production and energy conservation, with additional attention to positivity preservation of density and temperature.


\bibliographystyle{siamplain}
\bibliography{draft_ref}
\end{document}